\documentclass{amsart}

\usepackage{amsmath}
\usepackage{booktabs}
\usepackage{latexsym}
\usepackage{mathrsfs}
\usepackage{amsthm}
\usepackage{amssymb}
\usepackage{amsfonts}
\usepackage{amsbsy}

\usepackage[shortlabels]{enumitem}
\usepackage{url}
\usepackage{array}
\usepackage{pdflscape}
\usepackage{xcolor}
\usepackage{stmaryrd}
\usepackage{verbatim}
\usepackage{braket}
\usepackage{graphicx}
\usepackage{tikz}

\newcommand{\Cc}{{\mathbb C}}

\newcommand{\Rr}{{\mathbb R}}

\def\tr{\operatorname{tr}}

\theoremstyle{plain}
\newtheorem{thm}{Theorem}
\newtheorem{lem}[thm]{Lemma}
\newtheorem{prop}[thm]{Proposition}

\newtheorem{defn}{Definition}
\newtheorem{example}{Example}

\def\tr{\operatorname{tr}}
\usepackage[colorlinks=true]{hyperref}

\numberwithin{equation}{section}

\begin{document}
	
	\title{On the exact quantum chromatic number of generalized Johnson graphs}
	
	\author{Gaojun Luo}
	\address{The School of Mathematics, Nanjing University of Aeronautics and Astronautics, Nanjing 210016, China}
	\email{gaojun\_luo@nuaa.edu.cn}
	\thanks{The first author was supported by National Natural Science Foundation of China under Grant 12401690.}
	
	
	\author{Xiwang Cao}
	\address{The School of Mathematics, Nanjing University of Aeronautics and Astronautics, Nanjing 210016, China}
	\email{xwcao@nuaa.edu.cn}
	\thanks{The second author was supported by National Natural Science Foundation of China under Grant 12571575.}
	
	\author{Shitao Li}
	\address{the School of Internet, Anhui University, Hefei, Anhui
		230039, China}
	\email{lishitao0216@163.com}
	
	\author{Yang Li}
	\address{The School of Physical and Mathematical Sciences, Nanyang
		Technological University, 637371, Singapore}
	\email{yanglimath@163.com}
	
	\subjclass[2020]{05C15, 05B30}
	
	
	
	\keywords{Quantum chromatic number, quantum colouring, Johnson graph}
	
	\begin{abstract}
The quantum chromatic number is a fundamental parameter in the study of nonlocal games, capturing the extent to which entanglement can improve performance in distributed tasks. In this paper, we investigate the quantum chromatic number of generalized Johnson graphs. By constructing modulus-one orthogonal representations, we obtain general upper bounds on their quantum chromatic numbers. We further analyze the smallest eigenvalue of these graphs. Combining the resulting Hoffman-type lower bounds with the upper bounds obtained from orthogonal representations, we determine the exact quantum chromatic numbers of two infinite families of generalized Johnson graphs. Finally, applying a forbidden-distance theorem for binary codes, we show that the classical chromatic numbers of these families grow exponentially with $n$, whereas their quantum chromatic numbers grow linearly. These families exhibit an exponential separation between the classical and quantum chromatic numbers.
	\end{abstract}
	
	\maketitle
	
	\section{Introduction}

Quantum entanglement allows spatially separated players to
produce correlations that cannot be reproduced by classical strategies without
communication. Brassard, Broadbent and Tapp in~\cite{Brassard2005}
surveyed this phenomenon under the name quantum pseudo-telepathy, while
Cleve et al. in~\cite{Cleve2004} investigated the
consequences and limitations of nonlocal quantum strategies. Given a
graph $G=(V,E)$ and a positive integer $c$, a verifier sends vertices $u$ and
$v$ to two noncommunicating players, Alice and Bob, respectively. The players
must return the same color when $u=v$ and different colors when $uv\in E$. Classically, a perfect strategy exists if and only if
$c$ is at least the classical chromatic number $\chi(G)$. When the players share an entangled state and perform local
measurements, they may succeed with fewer colors. The least such number of
colors is called the quantum chromatic number of $G$ and is denoted by
$\chi_Q(G)$.

Cameron et al. in \cite{Cameron2007} gave a systematic mathematical treatment of the quantum chromatic number. They also established connections between the quantum chromatic number, clique numbers and orthogonal representations. Man\v{c}inska and Roberson in \cite{Mancinska2016} developed the framework of quantum graph homomorphisms. In this framework, a quantum coloring is viewed as a quantum homomorphism to a complete graph. In~\cite{Mancinska2016a}, the same authors constructed a graph on $14$ vertices whose quantum chromatic number is strictly smaller than its classical chromatic number. Lalonde in~\cite{Lalonde2025} proved that $14$ is the smallest possible order of a graph with such a separation. Hadamard graphs form a main family that exhibits a quantum advantage. For the Hadamard graph $H_n$, Brassard, Cleve and Tapp in~\cite{Brassard1999} proved that $\chi_Q(H_n)\leq n$ when $n$ is a power of $2$. Avis, Hasegawa, Kikuchi and Sasaki in \cite{Avis2006} extended this result to every positive integer $n$ divisible by four. For classical coloring, a result of Frankl and R\"odl in~\cite{Frankl1987} implies that the chromatic number of Hadamard graphs grows exponentially with $n$. Thus, Hadamard graphs exhibit an exponential separation between the classical and quantum chromatic numbers.

In general, determining the exact value of $\chi_Q(G)$ is difficult. Ji in \cite{Ji2013} proved that computing the quantum chromatic number of a general graph is NP-hard. At present, the exact value of $\chi_Q(G)$ is known for only a few infinite families of graphs. Cameron et al. in \cite{Cameron2007} showed that the complete graph on $n$ vertices has quantum chromatic number $n$ and that every nonempty bipartite graph has quantum chromatic number $2$. It was proved in \cite{Mancinska2016,Elphick2019,McNamara2024} that $\chi_Q(H_n)=n$ when $n$ is a multiple of $4$. Cao, Feng and Tan in~\cite{Cao2024} determined the quantum chromatic number of the binary Hamming graphs $H(4t-1,2,2t)$. Cao et al. in \cite{Cao2025} determined the quantum chromatic number for a class of generalized Hadamard graphs and quantified the separation between their quantum and classical chromatic numbers. Luo, Ning and Zhang in \cite{Luo2025} determined the quantum chromatic number of certain subgraphs of orthogonality graphs.

In this paper, we investigate the quantum chromatic number of the
generalized Johnson graph $J(n,k,k-r)$. In Section~3.1, we construct
modulus-one orthogonal representations and apply the bound of Cameron
et al.~\cite{Cameron2007} to derive upper bounds on
$\chi_Q(J(n,k,k-r))$. In Section~3.2, we study the
eigenvalues of generalized Johnson graphs. It is known in \cite{Brouwer2018} that $E_r(1)$ is the smallest
eigenvalue if and only if
$
r(n-1)\ge k(n-k).
$
We determine additional conditions under which $E_r(2)$ is the
smallest eigenvalue. Combining these spectral results with the
Hoffman-type bound of Elphick and Wocjan~\cite{Elphick2019}, we obtain
lower bounds on the quantum chromatic number. Matching the upper and
lower bounds yields exact values for two infinite families of
generalized Johnson graphs.
\begin{thm}\label{thm-exact-1} 
	Let $n,k,r$ be positive integers with $1\le r\le\min\{k,n-k\}$, $r(n-1)=k(n-k)$ and $r\ge\frac n4$. Then we have
	\[
	\chi_Q(J(n,k,k-r))=n.
	\]
\end{thm}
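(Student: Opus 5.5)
The plan is to prove matching upper and lower bounds $\chi_Q(J(n,k,k-r))\le n$ and $\chi_Q(J(n,k,k-r))\ge n$. Throughout, $J(n,k,k-r)$ has the $k$-subsets of $[n]$ as vertices, with $S\sim T$ iff $|S\cap T|=k-r$, equivalently $|S\setminus T|=|T\setminus S|=r$. The upper bound comes from a modulus-one orthogonal representation in $\Cc^n$ together with the bound of Cameron et al.~\cite{Cameron2007}. The lower bound comes from the Hoffman-type bound of Elphick and Wocjan~\cite{Elphick2019}, combined with the known description of the smallest eigenvalue~\cite{Brouwer2018}.

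\textbf{Upper bound.} Fix $\omega=e^{i\theta}$ on the unit circle and map each $k$-set $S$ to the vector $x_S\in\Cc^n$ with
\[
(x_S)_j=\omega \text{ if } j\in S,\qquad (x_S)_j=1 \text{ if } j\notin S .
\]
For adjacent $S,T$:
\begin{itemize}
\item the $n-2r$ coordinates in $S\cap T$ or outside $S\cup T$ contribute $1$ each;
\item the $r$ coordinates of $S\setminus T$ contribute $\omega$ (or $\bar\omega$, depending on convention);
\item the $r$ coordinates of $T\setminus S$ contribute the conjugate.
\end{itemize}
Hence
\[
\langle x_S,x_T\rangle=n-2r+2r\cos\theta .
\]
This vanishes iff $\cos\theta=1-\frac{n}{2r}$, which lies in $[-1,1]$ exactly when $r\ge n/4$, the hypothesis of the theorem. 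So we get a modulus-one orthogonal representation in dimension $n$, and the result of Cameron et al.\ (modulus-one orthogonal representation in $\Cc^d$ implies $\chi_Q\le d$) gives $\chi_Q\le n$.

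\textbf{Lower bound.} The graph is regular of degree $E_r(0)=\binom{k}{r}\binom{n-k}{r}$, and its eigenvalues are the Eberlein values
\[
E_r(j)=\sum_h(-1)^h\binom{j}{h}\binom{k-j}{r-h}\binom{n-k-j}{r-h}.
\]
For $j=1$,
\[
E_r(1)=\binom{k-1}{r}\binom{n-k-1}{r}-\binom{k-1}{r-1}\binom{n-k-1}{r-1}.
\]
Dividing by $E_r(0)$ and using $\binom{k-1}{r}/\binom{k}{r}=\frac{k-r}{k}$ and $\binom{k-1}{r-1}/\binom{k}{r}=\frac rk$ (and the same with $n-k$ in place of $k$) gives
\[
\frac{E_r(1)}{E_r(0)}=\frac{(k-r)(n-k-r)-r^2}{k(n-k)}=\frac{k(n-k)-rn}{k(n-k)}.
\]
Under $r(n-1)=k(n-k)$ the numerator equals $-r$ and the denominator equals $r(n-1)$, so $E_r(1)/E_r(0)=-1/(n-1)$.

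Since $r(n-1)\ge k(n-k)$ holds with equality, \cite{Brouwer2018} says $E_r(1)$ is the smallest eigenvalue. The Elphick--Wocjan bound $\chi_Q\ge 1-\lambda_{\max}/\lambda_{\min}$ then gives $\chi_Q\ge 1+(n-1)=n$.

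\textbf{Main obstacle.} The delicate point is making sure $E_r(1)$ really is the minimum eigenvalue in the boundary case of equality; this is what the cited criterion supplies. The remaining work is the routine binomial-ratio simplification above.
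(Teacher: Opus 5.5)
Your proposal is correct and follows the paper's proof essentially verbatim. The upper bound is the paper's Proposition~\ref{prop-3-1} with $t=1$ (Theorem~\ref{thm-3-1}), using the same vectors and the same choice $\cos\theta=1-\frac{n}{2r}$. The lower bound is Theorem~\ref{l-b-1}: the criterion of Brouwer et al.\ for $E_r(1)$, together with the Elphick--Wocjan bound and $E_r(1)/E_r(0)=-1/(n-1)$ at equality.

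The one step you leave implicit is the paper's reduction to $k\le n/2$ via $J(n,k,k-r)\cong J(n,n-k,n-k-r)$. This omission is harmless, because your construction, the ratio $E_r(1)/E_r(0)$, and the condition $r(n-1)\ge k(n-k)$ are all symmetric under $k\leftrightarrow n-k$.
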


\begin{thm}\label{thm-exact-2}
	Let $a$ and $r$ be integers with $r\ge 5$ and $2\le a \le \sqrt r$. Then we have
	\[
	\chi_Q(J(4r,2r-a,r-a))=4r
	\]
	and
	\[
	\chi_Q(J(4r,2r+a,r+a))=4r.
	\]
\end{thm}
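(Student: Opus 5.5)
The approach is to prove matching upper and lower bounds of $4r$. First I reduce the second graph to the first. Taking complements $A\mapsto [n]\setminus A$ gives an isomorphism $J(n,k,t)\cong J(n,n-k,n-2k+t)$. With $n=4r$, $k=2r+a$, $t=r+a$ we get $n-k=2r-a$ and $n-2k+t=r-a$, so $J(4r,2r+a,r+a)\cong J(4r,2r-a,r-a)$. It therefore suffices to treat $G=J(4r,2r-a,r-a)$. In $G$, two $(2r-a)$-sets are adjacent exactly when they meet in $r-a$ points, i.e.\ when their symmetric difference has size $2r=n/2$.

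For the upper bound I will use the modulus-one orthogonal representation of Section~3.1. To each vertex $A$ assign $x_A\in\Cc^{4r}$ with $(x_A)_i=-1$ if $i\in A$ and $+1$ otherwise. Then $\langle x_A,x_B\rangle = n-2|A\triangle B| = 4r-4r=0$ for adjacent $A,B$. By the bound of Cameron et al.~\cite{Cameron2007} (an orthogonal representation in $\Cc^c$ by modulus-one vectors gives $\chi_Q\le c$), we get $\chi_Q(G)\le 4r$.

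For the lower bound I will use the Hoffman-type bound of Elphick and Wocjan~\cite{Elphick2019}, $\chi_Q(G)\ge 1-\lambda_{\max}/\lambda_{\min}$. Here $\lambda_{\max}=E_r(0)=\binom{2r-a}{r-a}\binom{2r+a}{r}$, and the remaining eigenvalues are the Eberlein values $E_r(j)$, $0\le j\le 2r-a$. By the criterion quoted from \cite{Brouwer2018}, $E_r(1)$ is the smallest eigenvalue iff $r(4r-1)\ge (2r-a)(2r+a)$, i.e.\ iff $a^2\ge r$. So for $a<\sqrt r$ the natural candidate for $\lambda_{\min}$ is $E_r(2)$. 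The plan has three steps:
\begin{enumerate}[(i)]
\item Compute $E_r(1)/E_r(0)$ and $E_r(2)/E_r(0)$ in closed form from the explicit Eberlein sum. At $n=4r$, $k=2r-a$, intersection $r-a$, I expect to find $E_r(2)/E_r(0)=-1/(4r-1)$. This ratio is exactly the one forced by the existence of the $n$-dimensional representation above.
\item Show that $E_r(2)\le E_r(j)$ for all $j$ when $2\le a\le\sqrt r$ and $r\ge 5$. This is the paper's Section~3.2 result on when $E_r(2)$ is smallest.
\item Conclude that $1-E_r(0)/E_r(2)=1+(4r-1)=4r$.
\end{enumerate}
At $a=\sqrt r$ the equality $r(n-1)=k(n-k)$ also makes $E_r(1)$ extremal, consistent with Theorem~\ref{thm-exact-1}, so either eigenvalue may be used there.

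The main obstacle is step (ii), bounding $E_r(j)$ for $j\ge3$. I would first try to bound $|E_r(j)|/E_r(0)$ from above by something decaying in $j$, using the three-term recurrence of the Eberlein (dual Hahn) polynomials. Alternatively I would write $E_r(j)/E_r(0)$ as a hypergeometric sum and compare consecutive terms, showing $|E_r(j)|<|E_r(2)|$ for $3\le j\le 2r-a$. Separately, I would show that $E_r(1)>E_r(2)$ precisely because $a^2\le r$. The hypotheses $r\ge5$ and $a\ge2$ should enter as the conditions that make these inequalities hold for small $j$. Large $j$ should be handled by the crude bound $|E_r(j)|\le E_r(0)\binom{n}{j}^{-1/2}$, which follows from orthogonality of the eigenvalues weighted by the multiplicities $\binom{n}{j}-\binom{n}{j-1}$.
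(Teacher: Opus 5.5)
Your reduction of $J(4r,2r+a,r+a)$ to $J(4r,2r-a,r-a)$ and your upper bound are correct and agree with the paper. Your $\pm1$ vectors are the case $t=1$, $\theta=\pi$ of Proposition~\ref{prop-3-1}.

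The gap is in steps (i) and (iii) of the lower bound. The identity $E_r(2)/E_r(0)=-1/(4r-1)$ is false unless $a^2=r$. The Eberlein sum gives
\[
\frac{E_r(2)}{E_r(0)}=-\eta,\qquad \eta=\frac{4r^3-2r^2-2a^2r+a^2-a^4}{(2r-a)(2r+a)(2r-a-1)(2r+a-1)}.
\]
For example, $r=5$, $a=2$ gives $E_5(2)=-2388$ and $E_5(0)=44352$. So $\eta\approx 0.0538>1/19$, and $1-E_r(0)/E_r(2)\approx 19.57$, not $20$. In general $\eta-\frac{1}{4r-1}$ is a positive multiple of $r-a^2$. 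Hence for $a^2<r$ the Hoffman quantity is strictly below $4r$, and step (iii) can never produce $4r$ as an equality. Your heuristic that the ratio is ``forced'' by the $4r$-dimensional representation only yields an inequality: $1+1/\eta\le\chi_Q\le 4r$ gives $\eta\ge 1/(4r-1)$.

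What is missing is an argument that $1+1/\eta$, although below $4r$, still exceeds $4r-1$; integrality of $\chi_Q$ then gives $\chi_Q\ge 4r$. This is exactly where $a\ge 2$ is used:
\[
\frac{1}{4r-2}-\eta=\frac{a^2(a^2-1)(4r-1)}{(4r-2)(2r-a)(2r+a)(2r-a-1)(2r+a-1)}.
\]
This is positive precisely when $a\ge 2$. For $a\in\{0,1\}$ it vanishes, and the Hoffman bound yields only $4r-1$. So your expectation that $a\ge2$ is needed for the eigenvalue comparisons at small $j$ misplaces the hypothesis. In fact $E_r(2)$ is the smallest eigenvalue for every $0\le a<\sqrt r$ with $r\ge5$ (Proposition~\ref{thm:E2-s-1}).

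A smaller correction to step (ii) concerns the crude bound. With $v=\binom{4r}{2r-a}$, the identity $\sum_j m_jE_r(j)^2=vE_r(0)$ gives $|E_r(j)|/E_r(0)\le\sqrt{v/(E_r(0)m_j)}$, not $\binom{n}{j}^{-1/2}$. The extra factor $v/E_r(0)$ is of order $\sqrt r$ and must be bounded; the paper shows it is at most $2\sqrt r$. With that, the crude bound handles $j\ge 4$. It is too weak at $j=3$: it already fails for $(r,a)=(5,2)$. So $j=3$ needs a direct comparison, which the paper does via a closed form for $(E_r(3)-E_r(2))/E_r(0)$.
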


In contrast to the linear growth of the quantum chromatic number, we show that the classical chromatic number of $J(n,k,k-r)$ grows exponentially with $n$. We also quantify the separation between the quantum and classical chromatic numbers of these graphs.

\begin{thm}\label{thm-s}
	Let $n,k,r$ be positive integers with $1\le r\le\min\{k,n-k\}$. Let $J(n,k,k-r)$ be the generalized Johnson graph defined in Theorem \ref{thm-exact-1} or Theorem \ref{thm-exact-2}. Then we have $\chi_Q(J(n,k,k-r))<\chi(J(n,k,k-r))$ if $n$ is large enough.
\end{thm}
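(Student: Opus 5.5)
By Theorems \ref{thm-exact-1} and \ref{thm-exact-2}, $\chi_Q(J(n,k,k-r))=n$ for both families. So it suffices to show $\chi(J(n,k,k-r))>n$ once $n$ is large, and in fact we aim for an exponential lower bound. We use $\chi(G)\ge |V(G)|/\alpha(G)$. The vertex set has size $\binom nk$.

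An independent set in $J(n,k,k-r)$ is a family $\mathcal F$ of $k$-subsets of $[n]$ with no two members meeting in exactly $k-r$ elements. Identifying sets with characteristic vectors in $\{0,1\}^n$, two $k$-sets meet in $k-r$ elements exactly when their Hamming distance is $2r$. So $\mathcal F$ is a constant-weight binary code of length $n$ avoiding the distance $d=2r$.

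In both families $d$ is a constant fraction of $n$:
\begin{itemize}
\item In Theorem \ref{thm-exact-2}, $n=4r$ and $d=2r=n/2$.
\item In Theorem \ref{thm-exact-1}, $r\ge n/4$, and $r\le \min\{k,n-k\}\le n/2$ gives $d=2r\le n$. The extreme $d=n$ forces $k=n/2=r$, and then $r(n-1)=k(n-k)$ gives $n-1=n/2$, which is impossible for $n>2$. More usefully, $r(n-1)=k(n-k)\le n^2/4$ gives $r\le n^2/(4(n-1))\approx n/4$. Together with $r\ge n/4$ this pins $d\approx n/2$ as well.
\end{itemize}
The parity condition $d$ even also holds in both families.

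We then invoke the Frankl--R\"odl forbidden-distance theorem. For $\epsilon n\le d\le(1-\epsilon)n$ with $d$ even, any $\mathcal F\subseteq\{0,1\}^n$ avoiding distance $d$ satisfies $|\mathcal F|\le(2-\delta)^n$ for some $\delta=\delta(\epsilon)>0$. We need the version that also applies to subsets of the $k$-slice. The theorem is originally stated for the cube, and it directly yields $\alpha\le(2-\delta)^n$, since every independent set is a subset of the cube avoiding distance $d$.

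Meanwhile $k$ is close to $n/2$ in both families:
\begin{itemize}
\item In Theorem \ref{thm-exact-2}, $k=2r\pm a$ with $a\le\sqrt r$.
\item In Theorem \ref{thm-exact-1}, $k(n-k)=r(n-1)\ge n(n-1)/4$, so $|n-2k|\le\sqrt n$.
\end{itemize}
Hence $\binom nk\ge 2^n/\mathrm{poly}(n)$. This gives
\[
\chi(J(n,k,k-r))\ge \frac{\binom nk}{\alpha}\ge \frac{2^n}{\mathrm{poly}(n)(2-\delta)^n}=\frac{(1+\delta')^n}{\mathrm{poly}(n)},
\]
which exceeds $n$ for large $n$.

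The main obstacle is checking the hypotheses of the forbidden-distance theorem precisely. We must confirm that $d$ lies in a fixed range $[\epsilon n,(1-\epsilon)n]$ uniformly across each family and satisfies the required parity. We must also choose a form of the theorem, for example Frankl--R\"odl's bound $(2-\epsilon)^n$, that applies directly rather than through a weaker slice-specific version. If needed, we fall back on the cube statement as described above. The spectral and quantum ingredients are entirely supplied by the earlier theorems.
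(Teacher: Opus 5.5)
Your proposal is correct and follows the paper's proof: you bound $\chi\ge\binom nk/\alpha$, treat independent sets as binary codes avoiding the even distance $2r\approx n/2$, apply a forbidden-distance theorem in the cube to get $\alpha\le(2-\delta)^n$, and use $|k-n/2|\le\sqrt n/2$ to get $\binom nk\ge 2^n/\mathrm{poly}(n)$. The only difference is that you cite the Frankl--R\"odl cube theorem where the paper cites Keevash--Long's Theorem 1.3, and both give the same bound here.
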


	\section{Preliminaries}\label{sec:pre}
	
	Given a positive integer $n$, let $[n]=\{1,2,\ldots,n\}$. For $0\le k\le n$, let $\binom{[n]}{k}$ denote the set of all $k$-element subsets of $[n]$. Given two sets $A,B\subseteq[n]$, let $A\triangle B=(A\setminus B)\cup(B\setminus A)$ be the symmetric difference of $A$ and $B$. Let  $\binom ab=0$ if $b>a$ or $b<0$.
	
	\subsection{Generalized Johnson graph}
	
	Let $n,k,s$ be integers with $0\le s\le k\le n$.  The generalized Johnson graph $J(n,k,s)$
	is the graph whose vertex set is
	\[
	V(J(n,k,s))=\binom{[n]}{k}
	\]
	and where two distinct vertices $A,B\in\binom{[n]}k$ are adjacent if and only if $|A\cap B|=s$.
	
	Since  $|A\triangle B|=2(k-|A\cap B|)$,  two distinct vertices $A,B$ are adjacent if and only if $|A\triangle B|=2(k-s)$. The graph $J(n,k,s)$ is a distance-$r$ graph in the Johnson scheme with $r=k-s$. Throughout this paper, we often write it as $J(n,k,k-r)$. 
	
	Define the map from $\binom{[n]}k$ to $\binom{[n]}{n-k}$ by 
	\[
	A\longmapsto A^c=[n]\setminus A.
	\]
	If $|A\cap B|=s$, then we have
	\[
	|A^c\cap B^c|
	=n-|A\cup B|
	=n-(2k-s)
	=n-2k+s.
	\]
	Therefore, the graph $J(n,k,k-r)$ is isomorphic to $J(n,n-k,n-k-r)$. Since quantum chromatic number is invariant under graph isomorphism, we assume without loss of generality that $k\le \frac n2$.
	
	Next we recall the eigenvalues of $J(n,k,k-r)$. Given a $k$-set $A$, a neighbor is obtained by removing $r$ elements from $A$ and adding $r$ elements from $[n]\setminus A$. The graph $J(n,k,k-r)$ is regular
	with degree
	\[
	\Delta_r=\binom{k}{r}\binom{n-k}{r}.
	\]
	According to \cite{Godsil2016}, for each $j=0,1,\ldots,k$, the $j$-th eigenvalue of $J(n,k,k-r)$ is the Eberlein polynomial
	\begin{equation}\label{eq:eberlein}
		E_r(j)
		=
		\sum_{\ell=0}^{r}
		(-1)^\ell
		\binom{j}{\ell}
		\binom{k-j}{r-\ell}
		\binom{n-k-j}{r-\ell}.
	\end{equation}
	The multiplicity of $E_r(j)$ is
	\begin{equation}\label{eq:multiplicity}
		m_j=\binom nj-\binom n{j-1}.
	\end{equation}

	\subsection{Quantum coloring}
	
	A matrix $\mathbf{P}\in\Cc^{m\times m}$ is called a Hermitian projector if $\mathbf{P}^*=\mathbf{P}$ and $\mathbf{P}^2=\mathbf{P}$, where $\mathbf{P}^*$ denotes the conjugate transpose of $\mathbf{P}$. Cameron et al. \cite{Cameron2007} characterized the quantum chromatic number in terms of the existence of a suitable system of projectors.
	
	\begin{defn}\label{quantum-color}
		Let $G$ be a finite simple graph and let $c$ be a positive integer.  A quantum $c$-coloring of $G$ consists of a positive integer $m$ and Hermitian projectors
		\[
		\{\mathbf{P}_{v,\alpha}:v\in V(G),\ \alpha\in[c]\}\subseteq\Cc^{m\times m},
		\]
		satisfying the following two conditions.
		\begin{enumerate}[(1)]
			\item For every vertex $v\in V(G)$,
			\[
			\mathbf{P}_{v,\alpha}\mathbf{P}_{v,\beta}= \mathbf{O}\quad(\alpha\ne\beta),
			\quad
			\sum_{\alpha=1}^c \mathbf{P}_{v,\alpha}=\mathbf{I}_m.
			\]
			\item For every edge $uv\in E(G)$ and every color $\alpha\in[c]$,
			\[
			\mathbf{P}_{u,\alpha}\mathbf{P}_{v,\alpha}=\mathbf{O}.
			\]
		\end{enumerate}
		The quantum chromatic number $\chi_Q(G)$ is the least integer $c$ for which a quantum $c$-coloring exists.
	\end{defn}
	
The following notion is used to derive an upper bound on the quantum chromatic number.

\begin{defn}
	A modulus-one orthogonal representation of a graph $G$ in dimension $M$ is a map
	\[
	\rho:V(G)\longrightarrow \Cc^M
	\]
	satisfying the following conditions:
	\begin{enumerate}[(1)]
		\item each coordinate $\rho(v)_j$ of each vector $\rho(v)$ has modulus $1$.
		\item $\langle \rho(u),\rho(v)\rangle=0$ whenever $u$ and $v$ are adjacent in $G$.
	\end{enumerate}
	The minimum positive integer $M$ for which such a representation exists is denoted by $\xi(G)$.
\end{defn}

Using the parameter $\xi(G)$, Cameron et al. \cite{Cameron2007} established the following upper bound on the quantum chromatic number.
	
	\begin{lem}\label{lem:modulus-one-to-quantum}\cite{Cameron2007}
		For every graph $G$, we have $\chi_Q(G)\le \xi(G)$.
	\end{lem}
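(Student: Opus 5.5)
The lemma states that $\chi_Q(G)\le\xi(G)$. We prove it by converting a modulus-one orthogonal representation in dimension $M=\xi(G)$ into a quantum $M$-coloring in the sense of Definition~\ref{quantum-color}, with matrix size $m=M$. Fix such a representation $\rho$ and write $\rho(v)=(\rho(v)_1,\dots,\rho(v)_M)$ with $|\rho(v)_j|=1$. Let $\omega=e^{2\pi i/M}$ and let $\mathbf{F}$ be the $M\times M$ Fourier matrix with columns $f_\alpha=M^{-1/2}(\omega^{\alpha j})_{j=1}^M$, $\alpha\in[M]$; these columns form an orthonormal basis of $\Cc^M$. For each vertex $v$ put $\mathbf{D}_v=\operatorname{diag}(\rho(v)_1,\dots,\rho(v)_M)$. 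Since every diagonal entry has modulus one, $\mathbf{D}_v$ is unitary. We then define
\[
\mathbf{P}_{v,\alpha}=\mathbf{D}_v f_\alpha f_\alpha^{*}\mathbf{D}_v^{*},\qquad \alpha\in[M].
\]

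Condition (1) holds because the vectors $\mathbf{D}_v f_\alpha$, $\alpha\in[M]$, are the image of an orthonormal basis under a unitary map. Hence they are orthonormal, so the $\mathbf{P}_{v,\alpha}$ are rank-one Hermitian projectors that are mutually orthogonal and sum to $\mathbf{I}_M$.

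Condition (2) reduces to showing that $\langle \mathbf{D}_u f_\alpha,\mathbf{D}_v f_\alpha\rangle=0$ for every edge $uv$ and every $\alpha$, since $\mathbf{P}_{u,\alpha}\mathbf{P}_{v,\alpha}$ is a scalar multiple of this inner product times a rank-one matrix. The key computation is
\[
\langle \mathbf{D}_u f_\alpha,\mathbf{D}_v f_\alpha\rangle=\frac1M\sum_{j=1}^M \overline{\rho(u)_j}\,\rho(v)_j\,|\omega^{\alpha j}|^2=\frac1M\langle\rho(u),\rho(v)\rangle,
\]
where the inner product is taken conjugate-linear in the first argument; the other convention only conjugates both sides and does not affect vanishing. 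Here the phase factors cancel because each entry of $f_\alpha$ has modulus $M^{-1/2}$. This is the only place where the flatness of the Fourier basis is used, and it is the crux of the argument. The right-hand side vanishes by the orthogonality property of $\rho$ on edges. Therefore we obtain a quantum $M$-coloring, and so $\chi_Q(G)\le M=\xi(G)$.
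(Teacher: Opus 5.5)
Your proof is correct and complete. The paper does not prove this lemma and only cites Cameron et al. Your rank-one projectors $\mathbf{D}_v f_\alpha f_\alpha^{*}\mathbf{D}_v^{*}$ are the projector form of their strategy (a shared maximally entangled state, the diagonal phase unitary built from $\rho(v)$, then a measurement in the Fourier basis), so this is essentially the same argument.
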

	
On the other hand, Elphick and Wocjan in \cite{Elphick2019} established the following Hoffman-type lower bound on the quantum chromatic number in terms of the eigenvalues of the graph.
	
	\begin{lem}\label{lem:sp-quantum}\cite{Elphick2019}
		Let $G$ be a graph with at least one edge.  Let
		\[
		\lambda_1\ge \lambda_2\ge\cdots\ge\lambda_N
		\]
		be the eigenvalues of the adjacency matrix of $G$.  Then
		\[
		\chi_Q(G)\ge 1+\frac{\lambda_1}{|\lambda_N|}.
		\]
		In particular, if $G$ is $\Delta$-regular and has smallest eigenvalue $\tau<0$, then
		\[
		\chi_Q(G)\ge 1+\frac{\Delta}{|\tau|}.
		\]
	\end{lem}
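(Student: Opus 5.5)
The plan is to run a quantum analogue of the classical "pinching" proof of Hoffman's bound. Suppose $G$ has a quantum $c$-colouring $\{\mathbf{P}_{v,\alpha}\}\subseteq\Cc^{m\times m}$ in the sense of Definition~\ref{quantum-color}. Let $\omega=e^{2\pi i/c}$ and, for each vertex $v$, set
\[
\mathbf{U}_v=\sum_{\alpha=1}^c \omega^{\alpha}\mathbf{P}_{v,\alpha}.
\]
Condition (1) says the $\mathbf{P}_{v,\alpha}$ form an orthogonal resolution of the identity. Hence $\mathbf{U}_v$ is unitary, and $\mathbf{U}_v^k=\sum_\alpha\omega^{k\alpha}\mathbf{P}_{v,\alpha}$. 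Next let $\mathbf{D}$ be the block-diagonal unitary $\mathrm{diag}(\mathbf{U}_v)_{v\in V(G)}$ acting on $\Cc^{N}\otimes\Cc^m$, and let $\mathbf{B}=\mathbf{A}\otimes\mathbf{I}_m$, where $\mathbf{A}$ is the adjacency matrix. The spectrum of $\mathbf{B}$ is that of $\mathbf{A}$, with multiplicities multiplied by $m$.

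The key identity is
\[
\sum_{k=0}^{c-1}\mathbf{D}^k\mathbf{B}(\mathbf{D}^k)^*=\mathbf{O}.
\]
I would check it block by block. The $(u,v)$ block equals
\[
\mathbf{A}_{uv}\sum_{k}\sum_{\alpha,\beta}\omega^{k(\alpha-\beta)}\mathbf{P}_{u,\alpha}\mathbf{P}_{v,\beta}
= c\,\mathbf{A}_{uv}\sum_\alpha \mathbf{P}_{u,\alpha}\mathbf{P}_{v,\alpha},
\]
because the sum over $k$ of $\omega^{k(\alpha-\beta)}$ vanishes unless $\alpha=\beta$. If $uv\in E(G)$, this block is zero by condition (2). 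Otherwise $\mathbf{A}_{uv}=0$, and the block is zero trivially. The diagonal blocks also vanish, since $\mathbf{A}_{vv}=0$. This computation is the main step. The only potential obstacle is keeping track of the adjoints (that $(\mathbf{U}_v^k)^*=\sum_\beta\omega^{-k\beta}\mathbf{P}_{v,\beta}$), which holds because the projectors are Hermitian.

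Now take a unit eigenvector $\mathbf{x}$ of $\mathbf{A}$ for $\lambda_1$ and put $\mathbf{y}=\mathbf{x}\otimes \mathbf{e}$ for any unit $\mathbf{e}\in\Cc^m$. The $k=0$ term of the identity is $\mathbf{B}$ itself, so
\[
\lambda_1=\mathbf{y}^*\mathbf{B}\mathbf{y}=-\sum_{k=1}^{c-1}\mathbf{z}_k^*\mathbf{B}\mathbf{z}_k,\qquad \mathbf{z}_k=(\mathbf{D}^k)^*\mathbf{y}.
\]
Each $\mathbf{z}_k$ is a unit vector, so $\mathbf{z}_k^*\mathbf{B}\mathbf{z}_k\ge\lambda_N$. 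This gives $\lambda_1\le (c-1)|\lambda_N|$. Since $G$ has an edge, $\lambda_N<0$, and therefore $c\ge 1+\lambda_1/|\lambda_N|$.

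Applying this to $c=\chi_Q(G)$ yields the first inequality. For the regular case, a $\Delta$-regular graph has $\lambda_1=\Delta$ and $\lambda_N=\tau$, which gives the ``in particular'' statement.
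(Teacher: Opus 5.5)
Your proof is correct. It is essentially the argument of Elphick and Wocjan \cite{Elphick2019}: the pinching identity $\sum_{k=0}^{c-1}\mathbf{D}^k(\mathbf{A}\otimes\mathbf{I}_m)(\mathbf{D}^k)^*=\mathbf{O}$ built from the block-diagonal clock unitary, followed by a Rayleigh-quotient estimate. The paper itself gives no proof of this lemma and simply cites that reference; the one fact you use without comment, $\lambda_N<0$, follows from $\tr\mathbf{A}=0$ together with $\mathbf{A}\neq\mathbf{O}$.
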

	
	\section{Quantum chromatic number of generalized Johnson graphs}
	
	\subsection{Upper bound on the quantum chromatic number} 
	
	In this subsection, we prove the existence of a modulus-one orthogonal representation of the generalized Johnson graph $J(n,k,k-r)$. Based on Lemma \ref{lem:modulus-one-to-quantum}, we provide the upper bound on the quantum chromatic number of $J(n,k,k-r)$. We begin with a general result on the upper bound with requirement $F_{n,r,t}(\theta)=0$. 
	
	\begin{prop}\label{prop-3-1}
		Let $n,k,r$ be integers with $1\le r\le \min\{k,n-k\}$. Let $t$ be an integer with $0\le t\le n$ and let $\theta\in\Rr$ be a real number. Let $J(n,k,k-r)$ be the generalized Johnson graph. Let
		\begin{equation}\label{prop-3-1-eq}
			F_{n,r,t}(\theta)
			=
			\sum_{\substack{0\le u\le r\\0\le v\le r\\0\le t-u-v\le n-2r}}
			\binom{r}{u}
			\binom{r}{v}
			\binom{n-2r}{t-u-v}
			e^{i\theta(v-u)}.
		\end{equation}
		If $F_{n,r,t}(\theta)=0$, then 
		\[
		\chi_Q(J(n,k,k-r))\le \binom nt.
		\]
	\end{prop}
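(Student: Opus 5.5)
We construct a modulus-one orthogonal representation of $J(n,k,k-r)$ in dimension $\binom nt$ and then apply Lemma~\ref{lem:modulus-one-to-quantum}. Index the coordinates of $\Cc^{\binom nt}$ by the $t$-subsets $T\in\binom{[n]}t$. For a vertex $A\in\binom{[n]}k$ define
\[
\rho(A)_T=e^{i\theta|A\cap T|},\qquad T\in\binom{[n]}{t}.
\]
Every coordinate has modulus one, so condition (1) of the definition of a modulus-one orthogonal representation holds automatically. It remains to check orthogonality on edges.

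Let $A,B$ be adjacent, so $|A\cap B|=k-r$. Then $|A\setminus B|=|B\setminus A|=r$, and the set $C=(A\cap B)\cup([n]\setminus(A\cup B))$ has size $n-2r$. The inner product is
\[
\langle\rho(A),\rho(B)\rangle=\sum_{T}e^{i\theta(|A\cap T|-|B\cap T|)}.
\]
The elements of $T\cap A\cap B$ contribute equally to $|A\cap T|$ and $|B\cap T|$, and the elements of $T$ outside $A\cup B$ contribute to neither. Hence the exponent depends only on $u=|T\cap(A\setminus B)|$ and $v=|T\cap(B\setminus A)|$, with $|A\cap T|-|B\cap T|=u-v$. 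If the paper's convention takes the conjugate on the second argument, the phase is $e^{i\theta(u-v)}$; otherwise it is $e^{i\theta(v-u)}$.

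Next we count the $t$-sets $T$ with given $u,v$. We choose $u$ elements of $A\setminus B$, $v$ elements of $B\setminus A$, and the remaining $t-u-v$ elements from $C$. This gives $\binom ru\binom rv\binom{n-2r}{t-u-v}$ choices, valid exactly in the summation range of \eqref{prop-3-1-eq}. Therefore
\[
\langle\rho(A),\rho(B)\rangle=F_{n,r,t}(\pm\theta).
\]
The summand is symmetric under swapping $u\leftrightarrow v$, so $F_{n,r,t}(-\theta)=F_{n,r,t}(\theta)$; in fact $F$ is real. Either way, the inner product vanishes when $F_{n,r,t}(\theta)=0$.

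This shows $\xi(J(n,k,k-r))\le\binom nt$, and Lemma~\ref{lem:modulus-one-to-quantum} gives the claimed bound. The only delicate step is the bookkeeping in the counting. One must note that the intersection part $A\cap B$ and the exterior part $[n]\setminus(A\cup B)$ play identical roles, so they merge into the single block of size $n-2r$. One must also handle the inner-product convention, which is taken care of by the $u\leftrightarrow v$ symmetry of $F$.
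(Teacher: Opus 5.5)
Your proposal is correct and follows essentially the same route as the paper. You use the same representation $\rho(A)_T=e^{i\theta|A\cap T|}$ indexed by $t$-subsets and then apply Lemma~\ref{lem:modulus-one-to-quantum}. The only difference is that you count the $t$-sets with prescribed $u=|T\cap(A\setminus B)|$ and $v=|T\cap(B\setminus A)|$ directly, whereas the paper extracts the coefficient of $z^t$ from $(1+z)^{n-2r}(1+e^{-i\theta}z)^r(1+e^{i\theta}z)^r$; these are the same computation, and your observation that $F_{n,r,t}$ is real cleanly settles the inner-product convention.
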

	\begin{proof}
		We prove the upper bound of $\chi_Q(J(n,k,k-r))$ by designing a modulus-one orthogonal representation of $J(n,k,k-r)$.	
		
		For each $t$-subset $T\in\binom{[n]}t$ and each vertex $A\in\binom{[n]}k$, we define
		\[
		f_T(A)=e^{i\theta |A\cap T|},
		\]
		where $i=\sqrt{-1}$. Let $M=\binom nt$ and label the $t$-subsets of $[n]$ as $T_0,T_1,\ldots,T_{M-1}$. Let $\rho(A)=(f_{T_0}(A),f_{T_1}(A),\ldots,f_{T_{M-1}}(A))\in\Cc^M$. It is clear that each coordinate of $\rho(A)$ has modulus $1$.
		
		Given two adjacent vertices $A,B$ of $J(n,k,k-r)$, we have
		\begin{equation}\label{prop-3-1-eq-1}
			\langle \rho(A),\rho(B)\rangle=\sum_{T\in\binom{[n]}t}\overline{f_T(A)}f_T(B)=\sum_{T\in\binom{[n]}t}
			e^{i\theta(|B\cap T|-|A\cap T|)}.
		\end{equation}
		Since $|A|=|B|=k$ and $|A\cap B|=k-r$, the set $[n]$ is decomposed into the following four disjoint subsets:  
		\[
		A\cap B,
		\quad
		A\setminus B,
		\quad
		B\setminus A,
		\quad
		[n]\setminus(A\cup B),
		\]
		with $|A\setminus B|=|B\setminus A|=r$ and $|[n]\setminus(A\cup B)|=n-k-r$. Given a subset $X\subseteq [n]$, let
		\[
		\mathbf 1_X(x)
		=
		\begin{cases}
			1, & x\in X,\\
			0, & x\notin X.
		\end{cases}
		\]
		Then we have 
		\begin{equation}\label{prop-3-1-eq-2}
			|B\cap T|-|A\cap T|=\sum_{x\in T}\left(\mathbf 1_B(x)-\mathbf 1_A(x)\right).
		\end{equation}
		For each element $x\in [n]$, we define a weight function $w_x$ by
		\[
		w_x=
		\begin{cases}
			1, & x\in A\cap B,\\
			1, & x\notin A\cup B,\\
			e^{-i\theta}, & x\in A\setminus B,\\
			e^{i\theta}, & x\in B\setminus A.
		\end{cases}
		\]
		Hence, for each subset $T\subseteq[n]$, it follows from \eqref{prop-3-1-eq-2} that
		\begin{equation}\label{prop-3-1-eq-3}
			e^{i\theta\left(|B\cap T|-|A\cap T|\right)}
			=
			\prod_{x\in T} w_x.
		\end{equation}
		Combining \eqref{prop-3-1-eq-1} and \eqref{prop-3-1-eq-3}, we deduce that 
		\begin{equation}\label{prop-3-1-eq-4}
			\langle \rho(A),\rho(B)\rangle
			=
			\sum_{T\in\binom{[n]}{t}}
			e^{i\theta\left(|B\cap T|-|A\cap T|\right)}
			=
			\sum_{T\in\binom{[n]}{t}}
			\prod_{x\in T}w_x.
		\end{equation}
		
		We now build a generating function $\prod_{x\in[n]}(1+w_xz)$ for the sum over all $t$-subsets $T$. For each element \(x\in[n]\), the factor $1+w_xz$ has two choices. Choosing $1$ means that $x\notin T$, while choosing
		$w_xz$ means that $x\in T$. Expanding the generating function, we have 
		\[
		\prod_{x\in[n]}(1+w_xz)
		=
		\sum_{T\subseteq[n]}
		\left(\prod_{x\in T}w_x\right)z^{|T|}.
		\]
		According to \eqref{prop-3-1-eq-3} and \eqref{prop-3-1-eq-4}, $\langle \rho(A),\rho(B)\rangle$ is equal to the coefficient of $z^t$. Let $F_{n,r,t}(\theta)$ denote the coefficient of $z^t$. Since $|A\setminus B|=|B\setminus A|=r$, the values $w_x=1$, $w_x=e^{-i\theta}$ and  $w_x=e^{i\theta}$ occur respectively $n-2r$, $r$, $r$ times. Therefore, we have 
		\[
		\prod_{x\in[n]}(1+w_xz)=(1+z)^{n-2r}(1+e^{-i\theta}z)^r(1+e^{i\theta}z)^r,
		\]
		which implies that 
		\[
		F_{n,r,t}(\theta)
		=
		\sum_{\substack{0\le u\le r\\0\le v\le r\\0\le t-u-v\le n-2r}}
		\binom{r}{u}
		\binom{r}{v}
		\binom{n-2r}{t-u-v}
		e^{i\theta(v-u)}.
		\]
		By our assumption $F_{n,r,t}(\theta)=0$, we derive that $\langle \rho(A),\rho(B)\rangle=0$. Therefore $\{\rho(A): A \in \binom{[n]}k\}$ is a modulus-one orthogonal representation of $J(n,k,k-r)$ with dimension $M=\binom nt$. Using Lemma \ref{lem:modulus-one-to-quantum}, we have $\chi_Q(J(n,k,k-r))\le \binom nt$.
	\end{proof}
	
	Finding $t$ and $\theta$ satisfying $F_{n,r,t}(\theta)$ defined by \eqref{prop-3-1-eq}, we get the following two theorems. 
	
	\begin{thm}\label{thm-3-1}
		Let $n,k,r$ be integers with $1\le r\le \min\{k,n-k\}$ and $n \le 4r$. Let $J(n,k,k-r)$ be the generalized Johnson graph. Then we have $\chi_Q(J(n,k,k-r))\le n$.
	\end{thm}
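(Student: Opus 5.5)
The plan is to apply Proposition~\ref{prop-3-1} with the smallest nontrivial choice $t=1$, since then $\binom nt=n$ is exactly the bound we want. Everything then reduces to finding a real $\theta$ with $F_{n,r,1}(\theta)=0$.

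First I compute $F_{n,r,1}(\theta)$. It is easiest to read it off as the coefficient of $z$ in the generating function from the proof of Proposition~\ref{prop-3-1},
\[
(1+z)^{n-2r}(1+e^{-i\theta}z)^r(1+e^{i\theta}z)^r .
\]
Equivalently, in \eqref{prop-3-1-eq} only the triples $(u,v,t-u-v)\in\{(0,0,1),(1,0,0),(0,1,0)\}$ contribute. Either way,
\[
F_{n,r,1}(\theta)=(n-2r)+re^{-i\theta}+re^{i\theta}=n-2r+2r\cos\theta .
\]
This is real, so it vanishes exactly when
\[
\cos\theta=\frac{2r-n}{2r}=1-\frac{n}{2r}.
\]

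Second, I check that this value lies in $[-1,1]$, so that a real $\theta$ exists.
\begin{itemize}
\item The hypothesis $1\le r\le\min\{k,n-k\}$ gives $2r\le k+(n-k)=n$. Hence $1-\frac{n}{2r}\le 0\le 1$.
\item The hypothesis $n\le 4r$ gives $\frac{n}{2r}\le 2$, so $1-\frac{n}{2r}\ge -1$.
\end{itemize}
So we may take $\theta=\arccos\bigl(1-\frac{n}{2r}\bigr)$. Since $0\le t=1\le n$, Proposition~\ref{prop-3-1} applies and yields $\chi_Q(J(n,k,k-r))\le\binom n1=n$.

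There is no real obstacle here. The only point needing care is confirming that the range condition on $\cos\theta$ is exactly equivalent to $2r\le n\le 4r$. The lower bound $2r\le n$ comes for free from the standing assumption on $r$, so the only genuine hypothesis used is $n\le 4r$.
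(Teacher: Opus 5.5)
Your proposal is correct and is exactly the paper's proof: take $t=1$ in Proposition~\ref{prop-3-1}, so that $F_{n,r,1}(\theta)=n-2r+2r\cos\theta$, which vanishes at $\cos\theta=\frac{2r-n}{2r}$. You also verify that this value lies in $[-1,1]$, which the paper leaves implicit. Strictly, only $n\le 4r$ is needed for that, since $1-\frac{n}{2r}\le 1$ holds for any $n\ge 0$; so the range condition is equivalent to $n\le 4r$ alone, not to $2r\le n\le 4r$ as your closing remark says.
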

	\begin{proof}
		According to \eqref{prop-3-1-eq}, letting $t=1$ and $\cos\theta = \frac{2r-n}{2r}$, we have $F_{n,r,1}(\theta)=n-2r+2r\cos\theta=0$. The desired result follows from Proposition \ref{prop-3-1}.
	\end{proof}

	\begin{thm}\label{thm-3-2}
		Let $n,k,r$ be integers with $1\le r\le \min\{k,n-k\}$ and $ 4r< n \le 4r+\left\lfloor\frac{1+\sqrt{16r+1}}{2}\right\rfloor$. Let $J(n,k,k-r)$ be the generalized Johnson graph. Then we have 
		\[
		\chi_Q(J(n,k,k-r))\le \binom{n}{2}.
		\]
	\end{thm}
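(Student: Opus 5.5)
Take $t=2$ in Proposition~\ref{prop-3-1} and find a real $\theta$ with $F_{n,r,2}(\theta)=0$; the bound $\chi_Q(J(n,k,k-r))\le\binom n2$ then follows at once. Write $m=n-2r$ and $c=\cos\theta$. From the generating function $(1+z)^{m}(1+e^{-i\theta}z)^r(1+e^{i\theta}z)^r$ in the proof of Proposition~\ref{prop-3-1}, the coefficient of $z^2$ comes from three kinds of pairs.

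Both elements in the $(1+z)^m$ block contribute $\binom m2$. One element there and one in the $e^{\pm i\theta}$ blocks contribute $m\cdot 2rc$. Two elements from the $e^{\pm i\theta}$ blocks contribute
\[
\binom r2(e^{2i\theta}+e^{-2i\theta})+r^2=2\binom r2(2c^2-1)+r^2 .
\]
Hence
\[
F_{n,r,2}(\theta)=2r(r-1)(2c^2-1)+r^2+2rmc+\binom m2 ,
\]
which is real. So we need a root $c\in[-1,1]$ of the quadratic
\[
g(c)=4r(r-1)c^2+2rmc+\Bigl(\tbinom m2+r^2-2r(r-1)\Bigr).
\]

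For $r\ge2$ the leading coefficient is positive, so $g$ is convex and attains its minimum at $c_0=-\frac{m}{4(r-1)}$. The hypothesis $n>4r$ gives $m>2r$, so $c_0<-\frac12$. We must also keep $c_0\ge -1$, or else compare $g(-1)$ and $g(1)$.

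Evaluate at the endpoints. At $c=1$, $g(1)=\binom{m+2r}{2}>0$, as expected since $\theta=0$. At $c=-1$, $g(-1)=\binom{m}{2}-2rm+\binom{2r}{2}$, which is the $z^2$ coefficient of $(1+z)^m(1-z)^{2r}$. By the intermediate value theorem, $g(-1)\le 0$ suffices for a root in $[-1,1]$. With $m=n-2r$, the condition $g(-1)\le0$ is equivalent to
\[
(n-4r)^2-n\le0 .
\]
Setting $d=n-4r>0$, this reads $d^2-d-4r\le0$, that is,
\[
d\le \frac{1+\sqrt{16r+1}}{2}.
\]
Since $d$ is an integer, this is exactly the hypothesis $n\le 4r+\left\lfloor\frac{1+\sqrt{16r+1}}{2}\right\rfloor$.

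The case $r=1$ remains. There $g$ is linear in $c$, but the same sign argument applies: $g(1)>0$ and $g(-1)\le0$, so a root exists in $[-1,1]$. Then set $\theta=\arccos c$.

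The main obstacle is the algebraic verification that $g(-1)$ simplifies to $\frac{(n-4r)^2-n}{2}$. I would check this through the identity $\binom m2-2rm+\binom{2r}2=\frac{(m-2r)^2-(m+2r)}{2}$ and then substitute $m=n-2r$.
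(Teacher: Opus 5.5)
Your argument is the paper's: take $t=2$ in Proposition~\ref{prop-3-1}, note that $F_{n,r,2}(0)=\binom n2>0$ and $F_{n,r,2}(\pi)=\frac{(n-4r)^2-n}{2}\le 0$ under the hypothesis on $n$, and apply the intermediate value theorem. One arithmetic slip needs fixing: since $2\binom r2=r(r-1)$, the closed form is $F_{n,r,2}(\theta)=2r(r-1)c^2+2rmc+\binom m2+r$, not your $g$ with leading coefficient $4r(r-1)$. Plugging $c=\pm1$ into your $g$ gives values that differ by $r(r-1)$ from the correct ones you state. Also, for $r\ge2$ the true vertex is $-\frac{m}{2(r-1)}<-1$, so $F$ is actually increasing in $c$ on $[-1,1]$. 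The slip is harmless because you computed $g(\pm1)$ directly from the generating function, and the convexity discussion can simply be dropped.
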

	\begin{proof}
		letting $t=2$ in \eqref{prop-3-1-eq}, we have 
		\[
		F_{n,r,2}(\theta)=\binom{n-2r}{2}+2r(n-2r)\cos\theta+r+2r(r-1)\cos^2\theta.
		\]
		It is clear that the function $F_{n,r,2}(\theta)$ is continuous in \(\theta\). Putting $\theta=0$, we obtain 
		\[
		F_{n,r,2}(0)=\binom{n}{2}>0.
		\]
		Letting $\theta=\pi$, we get 
		\[
		F_{n,r,2}(\pi)=\frac{(n-4r)^2-n}{2}.
		\]
		Since $ 4r< n \le 4r+\left\lfloor\frac{1+\sqrt{16r+1}}{2}\right\rfloor$, we derive that $F_{n,r,2}(\pi)\le 0$. By the intermediate value theorem, there exists some
		$\theta\in [0,\pi]$ such that $F_{n,r,2}(\theta)=0$. Using Proposition \ref{prop-3-1}, we have
		\[
		\chi_Q(J(n,k,k-r))\le \binom{n}{2}.
		\]
	\end{proof}

	\subsection{Spectrum of $J(n,k,k-r)$ and lower bound on the quantum chromatic number}
	In this subsection, we discuss the spectrum of $J(n,k,k-r)$. By complement symmetry of $J(n,k,k-r)$, we assume that $k\le n/2$. Based on the spectrum of $J(n,k,k-r)$, we provide the lower bound on its quantum chromatic number.
	
	It was proved in \cite{Brouwer2018} that $E_r(1)$ is the smallest eigenvalue of $J(n,k,k-r)$ if and only if $r(n-1)\ge k(n-k)$. Using Lemma \ref{lem:sp-quantum}, we have the following lower bound on the quantum chromatic number of $J(n,k,k-r)$.
	
	\begin{thm}\label{l-b-1}
		Let $n,k,r$ be integers with $0< r\le k\le n/2$ and $r(n-1)\ge k(n-k)$. Let $J(n,k,k-r)$ be the generalized Johnson graph. Then we have
		\[
		\chi_Q(J(n,k,k-r))\ge 1+ \left\lceil
		\frac{k(n-k)}{rn-k(n-k)}
		\right\rceil.
		\]
	\end{thm}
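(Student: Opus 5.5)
We apply Lemma~\ref{lem:sp-quantum} to the $\Delta_r$-regular graph $J(n,k,k-r)$, with $\Delta_r=\binom kr\binom{n-k}r$. By the result of \cite{Brouwer2018} quoted above, the hypothesis $r(n-1)\ge k(n-k)$ (with $k\le n/2$) guarantees that $E_r(1)$ is the smallest eigenvalue. The proof therefore reduces to computing $E_r(1)$ explicitly, checking that it is negative, and evaluating the ratio $\Delta_r/|E_r(1)|$.

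First, put $j=1$ in \eqref{eq:eberlein}. Only $\ell=0,1$ contribute:
\[
E_r(1)=\binom{k-1}{r}\binom{n-k-1}{r}-\binom{k-1}{r-1}\binom{n-k-1}{r-1}.
\]
We express both terms through $\Delta_r$ using $\binom{k-1}{r}=\frac{k-r}{k}\binom kr$ and $\binom{k-1}{r-1}=\frac rk\binom kr$, together with the analogous identities for $n-k$. This gives
\[
E_r(1)=\Delta_r\,\frac{(k-r)(n-k-r)-r^2}{k(n-k)}=\Delta_r\,\frac{k(n-k)-rn}{k(n-k)}.
\]
If $r=k$, the term $\binom{k-1}{r}$ vanishes, but the formula still holds because the factor $k-r=0$. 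Since $r(n-1)\ge k(n-k)$ implies $rn>k(n-k)$, we have $E_r(1)<0$, so
\[
|E_r(1)|=\Delta_r\,\frac{rn-k(n-k)}{k(n-k)}.
\]
The graph has an edge because $r\le\min\{k,n-k\}$ and $\Delta_r>0$, so Lemma~\ref{lem:sp-quantum} applies with $\tau=E_r(1)$ and gives
\[
\chi_Q\ge 1+\frac{k(n-k)}{rn-k(n-k)}.
\]
Because $\chi_Q$ is an integer, we may replace this bound by the smallest integer that is at least as large. That integer is exactly $1+\lceil k(n-k)/(rn-k(n-k))\rceil$, which is the stated bound.

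The main point needing care is the identification of the smallest eigenvalue, and this comes directly from \cite{Brouwer2018}. The remaining work is the binomial simplification above and confirming that the denominator $rn-k(n-k)$ is positive, which is what makes the ratio well defined.
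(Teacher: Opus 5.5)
Your proposal is correct and follows the paper's proof: it identifies $E_r(1)$ as the smallest eigenvalue via \cite{Brouwer2018} and then applies Lemma~\ref{lem:sp-quantum} to the $\Delta_r$-regular graph. The only difference is that you spell out three steps the paper leaves implicit: the computation $E_r(1)=\Delta_r\frac{k(n-k)-rn}{k(n-k)}$ (recorded in the paper as \eqref{eq:theta1}), the check that this is negative, and the integrality argument that justifies the ceiling.
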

	\begin{proof}
		By our assumption, $E_r(1)$ is the smallest eigenvalue of $J(n,k,k-r)$. Since the graph $J(n,k,k-r)$ is regular with degree
		\[
		\Delta_r=\binom{k}{r}\binom{n-k}{r},
		\]
		it follows from Lemma \ref{lem:sp-quantum} that
		\[
		\chi_Q(J(n,k,k-r))\ge 1+ \left\lceil\frac{\Delta_r}{|E_r(1)|}\right\rceil=1+\left\lceil\frac{k(n-k)}{rn-k(n-k)}
		\right\rceil.
		\]
	\end{proof}
	
	Let $\Delta_r=E_r(0)=\binom{k}{r}\binom{n-k}{r}$ and let $\theta_j=\frac{E_r(j)}{\Delta_r}$ for $j=0,\cdots,k$. It is easy to check that
	\begin{equation}\label{eq:theta1}
		\theta_1
		=1-\frac{rn}{k(n-k)}
		=\frac{k(n-k)-rn}{k(n-k)}.
	\end{equation}
	\begin{equation}\label{eq:theta2}
		\theta_2
		=
		\frac{r(r-1)}{(n-k)(n-k-1)}
		-\frac{2r(k-r)}{k(n-k)}
		+\frac{(k-r)(k-r-1)}{k(k-1)}.
	\end{equation}
	Subtracting \eqref{eq:theta1} from \eqref{eq:theta2} gives rise to 	
	\[
	\theta_2-\theta_1
	=
	\frac{r(n-2)\bigl(r(n-1)-k(n-k)\bigr)}
	{k(n-k)(k-1)(n-k-1)}.
	\]
	Based on the above equation, we deduce that $E_r(2)<E_r(1)$ if and only if $r(n-1)<k(n-k)$. It is not enough to state that $E_r(2)$ is the smallest eigenvalue of $J(n,k,k-r)$. In fact, $E_r(2)$ is not the smallest eigenvalue of $J(n,k,k-r)$ under given conditions, as the following example reveals. 
	\begin{example}
		Let $n=11$, $k=5$ and $r=2$. Then $r(n-1)<k(n-k)$. The eigenvalues of $J(11,5,3)$ are
		\[
		\begin{aligned}
			E_2(0)&=150, & E_2(1)&=40, & E_2(2)&=-5,\\
			E_2(3)&=-12, & E_2(4)&=-2, & E_2(5)&=10.
		\end{aligned}
		\]
		The smallest eigenvalue is $E_2(3)=-12$, not $E_2(2)=-5$.  
	\end{example}
	
	In what follows, we determine the conditions under which $E_r(2)$ is the smallest eigenvalue of $J(n,k,k-r)$.
	
	\begin{prop}\label{thm:E2-s-1}
		Let $a$ and $r$ be integers with $r\ge 5$ and $0\le a<\sqrt r$. Then the smallest eigenvalue of $J(4r,2r-a,r-a)$ is $E_r(2)$.
	\end{prop}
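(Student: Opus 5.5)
Set $n=4r$ and $k=2r-a$, so that $n-k=2r+a$, $k-s=r$, and the two parts of $[n]\setminus$ in the symmetric difference have size $r$. The plan is to show $E_r(2)\le E_r(j)$ for every $j\in\{0,1,\dots,k\}$. Trivially $E_r(0)=\Delta_r>0$. For $j=1$, I use the identity for $\theta_2-\theta_1$. Here
\[
r(n-1)-k(n-k)=r(4r-1)-(4r^2-a^2)=a^2-r<0,
\]
so $E_r(2)<E_r(1)$.

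For $j\ge 3$ I use the symmetry of the configuration. Write $E_r(j)$ as a Krawtchouk-type expression. By \eqref{eq:eberlein}, $E_r(j)$ is the coefficient of $x^r$ in a generating function. Alternatively, and more usefully, I use the known fact that when $n=4r$ and $k$ is near $n/2$, the Eberlein polynomial $E_r(j)$ agrees, up to lower-order corrections, with the Krawtchouk polynomial $K_{2r}(2j;4r)$ at the middle value. That value is $(-1)^j\binom{2r}{j}$-like times a normalization. Concretely, I plan to compare the normalized values $\theta_j=E_r(j)/\Delta_r$ and to show the following:
\begin{enumerate}[(i)]
\item $\theta_2\approx -\frac{1}{4r}$ up to an $O(a^2/r^2)$ error. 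This follows from \eqref{eq:theta2} by direct substitution, which gives $\theta_2=-\frac{c(r,a)}{(2r+a)(2r+a-1)(2r-a)(2r-a-1)}$ with explicit numerator, roughly $4r^3$.
\item $|\theta_j|\le C/r^{\lceil j/2\rceil}$ for $j\ge 3$, with an explicit constant. In particular $|\theta_j|<|\theta_2|$ for all $j\ge 3$ once $r\ge 5$.
\end{enumerate}
To get (ii), I use the standard estimate that for a zonal spherical function of the Johnson scheme $\theta_j=\sum_\ell (-1)^\ell \binom{j}{\ell}\binom{k-j}{r-\ell}\binom{n-k-j}{r-\ell}/\Delta_r$. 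I bound it using the fact that $\theta_j$ is the normalized inner product of the $j$-th idempotent. Equivalently, I use the three-term recurrence in $j$ satisfied by the Eberlein polynomials (they are dual Hahn polynomials in the variable $j(n+1-j)$). I would run the recurrence: once $|\theta_1|,|\theta_2|$ are small, the recurrence coefficients near the middle give $|\theta_{j+1}|\lesssim |\theta_{j-1}|\cdot\frac{j}{n}$-type decay. An alternative is the Cauchy–Schwarz bound $\sum_j m_j\theta_j^2=\binom nk/\Delta_r$. For $j\ge3$ it gives $\theta_j^2\le \binom{n}{k}/(\Delta_r m_j)$. This is small because $m_j\ge\binom{4r}{3}-\binom{4r}{2}$ while $\binom nk/\Delta_r$ is only polynomial in $r$: roughly $\binom{4r}{2r}/\binom{2r}{r}^2\sim\sqrt{\pi r/2}$. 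With the subtracted terms $m_1\theta_1^2+m_2\theta_2^2$ this yields $\theta_j^2\lesssim \sqrt r/r^3$, so $|\theta_j|\lesssim r^{-5/4}<|\theta_2|\approx 1/(4r)$ for large $r$. This only handles $E_r(j)<0$ that are not too large in absolute value, and the crude bound may need sharpening for small $r$.

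The main obstacle is making step (ii) effective down to $r=5$ and uniform in $a<\sqrt r$. The variance bound gives $r^{-5/4}$ against $1/(4r)$, which requires $r$ to be quite large. So I would first try the exact dual-Hahn three-term recurrence to obtain a clean monotone decay $|\theta_{j}|\le|\theta_2|$ for $j\ge3$. I would then handle any finitely many leftover small $r$ (with $a\le 2$) by direct computation of \eqref{eq:eberlein}. The condition $a<\sqrt r$ enters twice. It is exactly what makes $\theta_2<\theta_1$, and it keeps $\theta_2\approx -(r-a^2)/(4r^2)$ bounded away from $0$ relative to the higher $\theta_j$.
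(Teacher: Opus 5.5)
You handle $j=0,1$ correctly, exactly as the paper does, and the trace identity $\sum_j m_j\theta_j^2=\binom nk/\Delta_r$ is the right tool for the tail. The gap is the case $j\ge 3$. You flag it yourself, but you do not close it.

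\textbf{The gap.} Applying the trace bound already at $j=3$ gives only $|\theta_j|\le\bigl(\binom nk/(\Delta_r m_3)\bigr)^{1/2}=O(r^{-5/4})$, because $m_3=\frac{4r(4r-1)(4r-5)}{6}=\Theta(r^3)$. With explicit constants ($\binom nk/\Delta_r\le 2\sqrt r$ and $|\theta_2|\ge\frac1{4r}$), this beats $|\theta_2|$ only once $r\ge 12$.

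Your proposed repairs are not yet arguments:
\begin{enumerate}[(a)]
\item The ``$j/n$-type decay'' from the dual Hahn recurrence is never derived, and the finite check is not carried out.
\item Claim (ii) is false as stated. For $a=0$ one has $E_r(2r)=(-1)^r\binom{2r}{r}$, so $|\theta_{2r}|=1/\binom{2r}{r}>4^{-r}$, which is far larger than $C/r^{r}$ for large $r$.
\end{enumerate}

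\textbf{The missing idea.} Treat $j=3$ exactly. The Eberlein formula gives
\[
\theta_3-\theta_2=\frac{4r(r-1)\bigl(4r^3-6r^2+2r+a^2-a^4\bigr)}{(2r-a)(2r+a)(2r-a-1)(2r+a-1)(2r-a-2)(2r+a-2)},
\]
which is positive because $a^4<r^2$. After that, the trace bound is needed only for $j\ge 4$, where $m_j\ge m_4=\frac{4r(4r-1)(4r-2)(4r-7)}{24}\ge 3r^4$. This gives $|\theta_j|\le\sqrt{2\sqrt r/(3r^4)}<\frac1{4r}$ already for every $r\ge 5$, so no case check is needed.

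\textbf{Missing uniform estimates.} You also need two explicit estimates that are uniform in $a$.
\begin{enumerate}[(a)]
\item You estimate $\binom nk/\Delta_r$ only at $a=0$. The paper shows that $R(a)=\binom{4r}{2r-a}/\bigl(\binom{2r-a}{r}\binom{2r+a}{r}\bigr)$ satisfies $R(a)\le R(0)$ for $0\le a<\sqrt r$. It gets this from the ratio $R(a+1)/R(a)=\frac{(2r-a)^2(r+a+1)}{(r-a)(2r+a+1)^2}$ together with the identity $(r-a)(2r+a+1)^2-(2r-a)^2(r+a+1)=(2a+1)(r-a^2-a)$. Wallis-type bounds then give $R(0)\le 2\sqrt r$.
\item You need a genuine lower bound on $|\theta_2|$, not an approximation. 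The inequality $|\theta_2|\ge\frac1{4r}$ is equivalent to $8r^3-4r^2+a^2-(4r+1)a^4\ge 0$, which holds since $a^4<r^2$.
\end{enumerate}

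\textbf{An error in your closing remark.} You write $\theta_2\approx-(r-a^2)/(4r^2)$, but that is approximately $\theta_2-\theta_1$, not $\theta_2$. Since $\theta_1=-a^2/(4r^2-a^2)<0$, one actually has $\theta_2\approx-\frac1{4r}$ for all $a<\sqrt r$, as you correctly stated in (i). The distinction matters. If $|\theta_2|$ really were of order $(r-a^2)/r^2$, then for $a^2=r-1$ no tail bound of order $r^{-5/4}$ or $r^{-7/4}$ could ever beat it.
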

	\begin{proof}
		Let $J(n=4r,k=2r-a,r-a)$ be the generalized Johnson graph. Its degree is
		\[
		\Delta_r=\binom{2r-a}{r}\binom{2r+a}{r}.
		\]
		For $j=0,\cdots,k$, we let $\theta_j=\frac{E_r(j)}{\Delta_r}$. By $0\le a<\sqrt r$, we derive that $r(n-1)<k(n-k)$ which implies that $\theta_2<\theta_1$. 
		
		We next compare $\theta_2$ with $\theta_3$. Substituting into the Eberlein formula gives
		\[
		\theta_3-\theta_2=\frac{4r(r-1)\left(4r^3-6r^2+2r+a^2-a^4\right)}{(2r-a)(2r+a)(2r-a-1)(2r+a-1)(2r-a-2)(2r+a-2)}.
		\]
		Since $a<\sqrt r$, we get that
		\[
		4r^3-6r^2+2r+a^2-a^4>4r^3-7r^2+2r>0
		\]
		for $r\ge2$. This implies that $\theta_3>\theta_2$.
		
		It remains to prove that $\theta_2<\theta_j$ for all $j\ge4$. Let $A$ be the adjacency matrix of $J(n=4r,k=2r-a,r-a)$ and let $v=\binom{4r}{2r-a}$ be the number of vertices. Since $J(n=4r,k=2r-a,r-a)$ is $\Delta_r$-regular, we have $\tr(A^2)=v\Delta_r$. On the other hand, the eigenvalues of $A^2$ are $E_r(j)^2$, with the same multiplicities $m_j$ for $j=0,\cdots,k$. Then we have $\tr(A^2)=\sum_{j=0}^k m_jE_r(j)^2$. This implies that $m_jE_r(j)^2\le v\Delta_r$. Dividing by $m_j\Delta_r^2$ and taking the square root, we have 
		\begin{equation}\label{thm-6-eq-1}
			|\theta_j|=\left|\frac{E_r(j)}{\Delta_r}\right|
			\le
			\sqrt{\frac{v}{\Delta_rm_j}},~\mbox{with}~m_j=\binom{4r}{j}-\binom{4r}{j-1}.
		\end{equation}
		When $4\le j\le k\le 2r$, the multiplicities satisfy $m_j\ge m_4$. Furthermore, we have 
		\begin{equation}\label{thm-6-eq-2}
			m_4
			=\binom{4r}{4}-\binom{4r}{3}
			=\frac{4r(4r-1)(4r-2)(4r-7)}{24}
			\ge 3r^4,
		\end{equation}
		for $r\ge5$.
		
		We now estimate $v/\Delta_r$. Let 
		\[
		R(a)=\frac{v}{\Delta_r}=\frac{\binom{4r}{2r-a}}{\binom{2r-a}{r}\binom{2r+a}{r}}.
		\]
		Given an integer $a\ge0$, we obtain
		\[
		\frac{R(a+1)}{R(a)}=\frac{(2r-a)^2(r+a+1)}{(r-a)(2r+a+1)^2}.
		\]
		Subtracting the numerator from the denominator yields
		\[
		(r-a)(2r+a+1)^2-(2r-a)^2(r+a+1)=(2a+1)(r-a^2-a).
		\]
		If $0\le b<a<\sqrt r$, then $r-b^2-b>0$, which implies that $R(b+1)\le R(b)$ for every step needed to reach $a$. Therefore, we have $R(a)\le R(0)$. By Wallis' inequality in \cite{ChenQi2005} for central binomial coefficients, we have 
		\[
		\frac{4^r}{\sqrt{4r}} < \binom{2r}{r} < \frac{4^r}{\sqrt{3r+1}},
		\]
		when $r\ge 5$.
		This deduces that 
		\begin{equation}\label{thm-6-eq-3}
			R(a)\le R(0)=\frac{\binom{4r}{2r}}{\binom{2r}{r}^2}\le 2\sqrt r.
		\end{equation}
		
		For each $j\ge4$, using $r\ge5$, \eqref{thm-6-eq-2} and \eqref{thm-6-eq-3}, we have
		\[
		|\theta_j|\le\sqrt{\frac{2\sqrt r}{3r^4}}<\frac1{4r}.
		\]
		It follows from \eqref{eq:theta2} that 
		\begin{equation}\label{thm-6-eq-4}
			\theta_2=-\frac{4r^3-2r^2-2a^2r+a^2-a^4}{(2r-a)(2r+a)(2r-a-1)(2r+a-1)}.
		\end{equation}
		The inequality $|\theta_2|\ge\frac1{4r}$ is equivalent to $8r^3-4r^2+a^2-(4r+1)a^4\ge 0$. Thanks to $0\le a<\sqrt r$ and $r\ge5$, we have 
		\[
		8r^3-4r^2+a^2-(4r+1)a^4
		>8r^3-4r^2-(4r+1)r^2
		=4r^3-5r^2>0.
		\]
		Then we arrive at $	|\theta_j|<|\theta_2|$ for each $j\ge 4$. According to \eqref{thm-6-eq-4}, we have $\theta_2<0$. This implies $\theta_2<\theta_j$ for each $j\ge 4$. It is clear that $\theta_0=1>\theta_2$. Together with the explicit comparisons for $j=1,3$, this proves that $E_r(2)$ is the smallest eigenvalue of $J(n=4r,k=2r-a,r-a)$.
	\end{proof}
	
	Based on Lemma \ref{lem:sp-quantum} and Proposition \ref{thm:E2-s-1}, we obtain another lower bound on the quantum chromatic number of $J(n,k,k-r)$, as given below.
	
	\begin{thm}\label{l-b-2}
		Let $r\ge5$ and let $a$ be an integer with $0\le a<\sqrt r$. Then we have 
		\[
		\chi_Q(J(4r,2r-a,r-a))\ge 1+ \left\lceil
		\frac{(2r-a)(2r+a)(2r-a-1)(2r+a-1)}{4r^3-2r^2-2a^2r+a^2-a^4}
		\right\rceil.
		\]
	\end{thm}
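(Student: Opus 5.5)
The plan is to combine Proposition~\ref{thm:E2-s-1} with the Hoffman-type bound of Lemma~\ref{lem:sp-quantum}, in the same way Theorem~\ref{l-b-1} was derived from the result of \cite{Brouwer2018}. Set $n=4r$ and $k=2r-a$, so that $n-k=2r+a$. The hypotheses $r\ge5$ and $0\le a<\sqrt r$ are exactly those of Proposition~\ref{thm:E2-s-1}. That proposition therefore tells us that the smallest eigenvalue of $J(4r,2r-a,r-a)$ is $\tau=E_r(2)=\theta_2\Delta_r$, where $\Delta_r=\binom{2r-a}{r}\binom{2r+a}{r}$ is the degree.

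Next we confirm that Lemma~\ref{lem:sp-quantum} applies. The graph has at least one edge because $1\le r\le\min\{k,n-k\}$, and this also gives $\Delta_r>0$. We need $\tau<0$. By \eqref{thm-6-eq-4},
\[
\theta_2=-\frac{4r^3-2r^2-2a^2r+a^2-a^4}{(2r-a)(2r+a)(2r-a-1)(2r+a-1)}.
\]
The denominator is positive since $a<\sqrt r<2r-1$. The numerator is positive: using $a^2<r$ we get
\[
4r^3-2r^2-2a^2r+a^2-a^4>4r^3-2r^2-2r^2-r^2=4r^3-5r^2>0 .
\]
Hence $\tau<0$ and
\[
\frac{\Delta_r}{|\tau|}=\frac{1}{|\theta_2|}=\frac{(2r-a)(2r+a)(2r-a-1)(2r+a-1)}{4r^3-2r^2-2a^2r+a^2-a^4}.
\]

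Lemma~\ref{lem:sp-quantum} then gives $\chi_Q\ge 1+\Delta_r/|\tau|$. Since $\chi_Q$ is an integer, we may replace the right-hand side by $1+\lceil\Delta_r/|\tau|\rceil$, which is the stated bound.

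The only step needing real care is the formula \eqref{thm-6-eq-4} for $\theta_2$, since everything else is bookkeeping. If that formula has to be re-derived rather than taken from the proof of Proposition~\ref{thm:E2-s-1}, I would start from \eqref{eq:theta2}. Substituting $k=2r-a$ and $n-k=2r+a$ gives three terms. I would put them over the common denominator $(2r-a)(2r+a)(2r-a-1)(2r+a-1)$ and simplify, which is a routine polynomial identity check.
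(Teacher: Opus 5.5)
Your proposal is correct and is the paper's proof: Proposition~\ref{thm:E2-s-1} identifies $E_r(2)$ as the smallest eigenvalue, and Lemma~\ref{lem:sp-quantum} with $\Delta_r/|E_r(2)|=1/|\theta_2|$, using \eqref{thm-6-eq-4}, gives the stated bound. Your explicit checks that the graph has an edge, that $\theta_2<0$, and that integrality of $\chi_Q$ justifies the ceiling are details the paper leaves implicit, and they are all sound.
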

	\begin{proof}
		According to Proposition \ref{thm:E2-s-1}, the smallest eigenvalue of $J(n=4r,k=2r-a,r-a)$ is $E_r(2)$. By Lemma \ref{lem:sp-quantum}, we have 
		\[
		\begin{aligned}
			\chi_Q(J(4r,2r-a,r-a))&\ge 1+ \left\lceil\frac{1}{|\theta_2|}\right\rceil\\
			&=1+ \left\lceil
			\frac{(2r-a)(2r+a)(2r-a-1)(2r+a-1)}{4r^3-2r^2-2a^2r+a^2-a^4}
			\right\rceil.
		\end{aligned}
		\]
	\end{proof}
	
	Using a similar technique as shown in the proof of Proposition \ref{thm:E2-s-1}, we obtain another family of generalized Johnson graphs with the smallest eigenvalue $E_r(2)$.
	
	\begin{prop}\label{thm:E2-s-2}
		Let $r$ and $t$ be positive integers with $t^2-t<4r-2$. Then, for all sufficiently large $r$, the smallest eigenvalue of $J(4r+t,2r,r)$ is $E_r(2)$.
	\end{prop}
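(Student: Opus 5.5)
Since $n=4r+t$ and $k=2r\le n/2$, I will follow the template of Proposition~\ref{thm:E2-s-1}, working with $\theta_j=E_r(j)/\Delta_r$ and $\Delta_r=\binom{2r}{r}\binom{2r+t}{r}$. The argument has four comparisons.

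\emph{Step 1: $\theta_2<\theta_1$.} By the identity for $\theta_2-\theta_1$ displayed before the example, it suffices to check $r(n-1)<k(n-k)$. Here this reads
\[
r(4r+t-1)<2r(2r+t),
\]
that is, $t-1<2t$, which holds for every $t\ge 1$.

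\emph{Step 2: $\theta_2<\theta_3$.} I will substitute $n=4r+t$, $k=2r$ into the Eberlein formula, or equivalently into the normalized forms like \eqref{eq:theta2} extended to $j=3$. I expect a closed form
\[
\theta_3-\theta_2=\frac{c\,r(r-1)\,P(r,t)}{D(r,t)},
\]
where $D$ is a positive product of linear factors and $P$ is a polynomial. I expect the hypothesis $t^2-t<4r-2$ to be exactly what makes $P>0$, in the same way $a<\sqrt r$ was used in Proposition~\ref{thm:E2-s-1}. If $P$ does not factor cleanly, I will compare leading terms in $r$ under $t=O(\sqrt r)$ and absorb the lower-order terms using "$r$ sufficiently large". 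This symbolic computation, and showing that the constraint on $t$ controls the sign of $P$, is the main obstacle.

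\emph{Step 3: the tail $j\ge 4$.} I reuse the trace bound \eqref{thm-6-eq-1}:
\[
|\theta_j|\le \sqrt{\frac{v}{\Delta_r m_j}},\qquad m_j\ge m_4\asymp \frac{(4r)^4}{24}\quad (4\le j\le 2r).
\]
For $R=v/\Delta_r=\binom{4r+t}{2r}\big/\bigl(\binom{2r}{r}\binom{2r+t}{r}\bigr)$, I will bound the ratio $R(t+1)/R(t)$ as in the proof of Proposition~\ref{thm:E2-s-1}. Each step multiplies $R$ by roughly $(4r+t)(r+t)/\bigl((2r+t)(2r+t+1)\bigr)\approx 1$ up to a factor $1+O(t^2/r^2)$. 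Since $t=O(\sqrt r)$, this gives $R(t)\le C R(0)\le C'\sqrt r$ via Wallis' inequality. Hence $|\theta_j|=O(r^{-7/4})$.

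\emph{Step 4: size and sign of $\theta_2$.} From \eqref{eq:theta2}, I will compute $\theta_2$ explicitly. The leading behaviour should be $\theta_2\approx -\tfrac{1}{4r}$, as in \eqref{thm-6-eq-4}, with $\theta_2<0$ and $|\theta_2|\ge c/r$. Combined with Step 3, this gives $|\theta_j|<|\theta_2|$ for $j\ge4$ once $r$ is large, so $\theta_2<\theta_j$. Together with $\theta_0=1$ and Steps 1–2, $E_r(2)$ is the smallest eigenvalue.
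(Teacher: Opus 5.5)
Your four steps follow the paper's proof: compare $\theta_1$ with $\theta_2$, use an explicit formula for $\theta_3-\theta_2$, and use the trace bound with $v/\Delta_r=O(\sqrt r)$ and $m_j\ge m_4=\Omega(r^4)$ to get $|\theta_j|=O(r^{-7/4})$ for $j\ge4$. \textbf{However, Step 4 asserts something false.} The hypothesis allows $t$ up to about $2\sqrt r$, and your Step 3 already works in that uniform range. In that range $\theta_2$ is not $\approx -1/(4r)$. Rewrite the numerator of \eqref{eq:theta2-perturbed} as
\[
4r^2-rt^2+3rt-2r+t^2-t=r\bigl(4r-2-(t^2-t)\bigr)+2rt+t^2-t .
\]
Take $t(t-1)=4r-4$, which gives infinitely many pairs (any $t\equiv 0,1 \pmod{4}$ with $r=1+t(t-1)/4$). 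Then the numerator equals $r(2t+2)+t^2-t\approx 4r^{3/2}$, while the denominator is $\approx 16r^3$. So $|\theta_2|\approx \tfrac14 r^{-3/2}$ and $r|\theta_2|\to 0$, and no bound $|\theta_2|\ge c/r$ holds uniformly. The correct uniform bound is $|\theta_2|=\Omega(r^{-3/2})$:
\begin{itemize}
\item for $t\le\sqrt r$ the numerator is at least $r(3r+1)$;
\item for $t>\sqrt r$ it exceeds $2rt>2r^{3/2}$.
\end{itemize}
The proof therefore closes only because $r^{-3/2}\gg r^{-7/4}$, which is exactly the comparison the paper makes. For fixed $t$ your estimate $-1/(4r)$ is right, but then the hypothesis $t^2-t<4r-2$ is vacuous for large $r$.

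The same boundary regime defeats your fallback in Step 2. When $t^2-t\approx 4r$, the leading terms in $r$ cancel and $\theta_3-\theta_2$ is only of order $r^{-2}$, so comparing leading terms cannot decide the sign. You need the exact factorization
\[
\theta_3-\theta_2=\frac{r^2(r-1)(4r+t-4)(4r-t^2+t-2)}{(2r)(2r+t)(2r-1)(2r+t-1)(2r-2)(2r+t-2)}.
\]
This confirms your guess that the hypothesis is exactly the positivity of the last factor.

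There is also a minor slip in Step 3. The exact ratio is
\[
\frac{R(s)}{R(s-1)}=\frac{(4r+s)(r+s)}{(2r+s)^2}=1+\frac{rs}{(2r+s)^2},
\]
which is $1+\Theta(s/r)$, not $1+O(t^2/r^2)$. The product over $s\le t$ is still at most $\exp\bigl(t(t+1)/(8r)\bigr)<e$, so $R(t)\le C\,R(0)=O(\sqrt r)$ holds as you need. The paper obtains this bound from Stirling directly.
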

	
	\begin{proof}
		Let $J(n=4r+t,k=2r,r)$ be the generalized Johnson graph. Let $\Delta_r=\binom{2r}{r}\binom{2r+t}{r}$ be the degree of $J(n=4r+t,k=2r,r)$. Let $\{E_r(j):j=0,\cdots,k\}$ be the eigenvalues of $J(n=4r+t,k=2r,r)$ and let $\theta_j=E_r(j)/\Delta_r$ for each $j=0,\cdots,k$.

		From \eqref{eq:theta1} and \eqref{eq:theta2}, we have
		\[
		\theta_1=1-\frac{r(4r+t)}{(2r)(2r+t)}=\frac{t}{2(2r+t)}
		\]
		and
		\begin{equation}\label{eq:theta2-perturbed}
			\theta_2=-\frac{4r^2-r t^2+3rt-2r+t^2-t}{2(2r-1)(2r+t)(2r+t-1)}.
		\end{equation}
		Since $t^2-t<4r-2$, we get that $\theta_1>0$ and $\theta_2<0$. By a direct calculation, we have
		\[
		\theta_3-\theta_2=\frac{r^2(r-1)(4r+t-4)(4r-t^2+t-2)}{(2r)(2r+t)(2r-1)(2r+t-1)(2r-2)(2r+t-2)}.
		\]
		By $t^2-t<4r-2$, we have $\theta_3>\theta_2$.

		Let $v=\binom{4r+t}{2r}$ be the number of vertices of $J(4r+t,2r,r)$. Let $m_j=\binom{4r+t}{j}-\binom{4r+t}{j-1}$ be the multiplicities of the eigenvalue $E_r(j)$. Our assumption $t^2-t<4r-2$ implies $t=O(\sqrt r)$. By Stirling's approximation in \cite{Robbins1955}, uniformly for $0<t<a\sqrt{r}$ with $a>0$ being some constant, we have 
		\[
		\frac{v}{\Delta}=\frac{\binom{4r+t}{2r}}{\binom{2r}{r}\binom{2r+t}{r}}=O(\sqrt r).
		\]
		
		For $4\le j \le 2r$, we have $m_j\ge m_4 \ge c r^4$ for some constant $c>0$. Then we have $m_j=\Omega(r^4)$ whenever $4\le j \le 2r$. Using a similar argument as stated in \eqref{thm-6-eq-1}, we have
		\[
		|\theta_j|=\left|\frac{E_r(j)}{\Delta}\right|
		\le
		\sqrt{\frac{v}{\Delta_rm_j}}=O(r^{-7/4}), ~\mbox{for}~ j \ge 4.
		\]
		By \eqref{eq:theta2-perturbed} and $t=O(\sqrt r)$, we deduce that $|\theta_2|= \Omega(r^{-3/2})$.

		Since $r^{-3/2}\gg r^{-7/4}$, for sufficiently large $r$, we have $|\theta_j|<|\theta_2|$ for $4\le j \le 2r$. The desired result follows as $\theta_2<0$.
	\end{proof}
	
	The following lower bound on the quantum chromatic number of $J(4r+t,2r,r)$ is derived from Lemma \ref{lem:sp-quantum} and Proposition \ref{thm:E2-s-2}
	
	\begin{thm}
		Let $r$ and $t$ be positive integers such that $t^2-t<4r-2$. Then, for all sufficiently large $r$, we have
		\[
		\chi_Q\left(J(4r+t,2r,r)\right)
		\ge 1+
		\left\lceil\frac{2(2r-1)(2r+t)(2r+t-1)}{4r^2-rt^2+3rt-2r+t^2-t}\right\rceil.
		\]
	\end{thm}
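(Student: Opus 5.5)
The lower bound follows directly from Proposition~\ref{thm:E2-s-2} and Lemma~\ref{lem:sp-quantum}, in the same way as Theorem~\ref{l-b-2}.

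Fix positive integers $r,t$ with $t^2-t<4r-2$. Then $k=2r\le n/2=(4r+t)/2$, so the standing convention $k\le n/2$ holds and $J(4r+t,2r,r)$ is the graph $J(n,k,k-r)$ with $n=4r+t$ and $k=2r$. It is regular of degree
\[
\Delta_r=\binom{2r}{r}\binom{2r+t}{r}>0,
\]
so it has at least one edge. By Proposition~\ref{thm:E2-s-2}, for all sufficiently large $r$ its smallest eigenvalue is $\tau=E_r(2)$. Moreover, \eqref{eq:theta2-perturbed} together with $t^2-t<4r-2$ shows that $\theta_2=E_r(2)/\Delta_r<0$, so $\tau<0$.

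I then apply the regular form of Lemma~\ref{lem:sp-quantum}:
\[
\chi_Q(J(4r+t,2r,r))\ge 1+\frac{\Delta_r}{|E_r(2)|}=1+\frac{1}{|\theta_2|}.
\]
Inverting \eqref{eq:theta2-perturbed} gives
\[
\frac{1}{|\theta_2|}=\frac{2(2r-1)(2r+t)(2r+t-1)}{4r^2-rt^2+3rt-2r+t^2-t}.
\]
This requires the denominator to be positive, which is exactly the condition $\theta_2<0$ already used; it follows from $t^2-t<4r-2$ (and is in any case implicit in Proposition~\ref{thm:E2-s-2}). Since $\chi_Q$ is an integer, $\chi_Q\ge 1+x$ implies $\chi_Q\ge 1+\lceil x\rceil$, which yields the stated bound.

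The only substantive point is that $E_r(2)$ is indeed the smallest eigenvalue. That is supplied by Proposition~\ref{thm:E2-s-2}, and it is the source of the hypothesis ``for all sufficiently large $r$''. Everything else is a direct substitution.
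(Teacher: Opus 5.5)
Your proposal is correct and follows the paper's own proof: the paper also just combines Proposition~\ref{thm:E2-s-2}, which makes $E_r(2)$ the smallest eigenvalue for large $r$, with the regular form of Lemma~\ref{lem:sp-quantum}, and inverts the explicit formula \eqref{eq:theta2-perturbed} for $\theta_2$. Your extra checks, that the denominator is positive under $t^2-t<4r-2$ and that integrality of $\chi_Q$ allows taking the ceiling, fill in details the paper leaves implicit.
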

	\begin{proof}
		The desired result follows from Proposition \ref{thm:E2-s-2} and Lemma \ref{lem:sp-quantum}.
	\end{proof}
~~
~~

\subsection{Proofs of Theorems \ref{thm-exact-1} and \ref{thm-exact-2} }
~~
\medskip

\textbf{Proof of Theorem \ref{thm-exact-1}:} By Theorem \ref{thm-3-1} and Theorem \ref{l-b-1}, we have $\chi_Q(J(n,k,k-r))=n$ when $k\le n/2$. Since $J(n,k,k-r)$ is isomorphic to $J(n,n-k,n-k-r)$, we obtain the desired result.

\textbf{Proof of Theorem \ref{thm-exact-2}:} By $n=4r$ and Theorem \ref{thm-3-1}, we have $\chi_Q(J(4r,2r-a,r-a))\le 4r$. If $a^2=r$, then $r(4r-1)=(2r-a)(2r+a)$. According to Theorem \ref{thm-exact-1}, we obtain $\chi_Q(J(4r,2r-a,r-a))=4r$.
		
We next assume $a^2<r$. It follows from Theorem \ref{l-b-2} that
		\begin{equation}\label{thm12-eq-1}
			\chi_Q(J(4r,2r-a,r-a))\ge 1+ \left\lceil
			\frac{(2r-a)(2r+a)(2r-a-1)(2r+a-1)}{4r^3-2r^2-2a^2r+a^2-a^4}
			\right\rceil.
		\end{equation}
		Let
		\[
		\eta
		=
		\frac{4r^3-2r^2-2a^2r+a^2-a^4}
		{(2r-a)(2r+a)(2r-a-1)(2r+a-1)}.
		\]
		A direct computation gives
		\begin{equation}\label{thm12-eq-2}
			\frac1{4r-2}-\eta
			=
			\frac{a^2(4r-1)(a^2-1)}
			{(4r-2)(2r-a)(2r+a)(2r-a-1)(2r+a-1)}.
		\end{equation}
		Since $a\ge2$, the numerator in \eqref{thm12-eq-2} is positive. This implies that $\eta<\frac{1}{4r-2}$. By \eqref{thm12-eq-1}, we have
		\[
		\chi_Q(J(4r,2r-a,r-a))
		\ge
		1+\frac{1}{\eta}
		>4r-1.
		\]
		Combining the upper bound $\chi_Q(J(4r,2r-a,r-a))\le 4r$, we have $\chi_Q(J(4r,2r-a,r-a))= 4r$.
		
		Since $J(4r,2r-a,r-a)$ is isomorphic to $J(4r,2r+a,r+a)$, the same exact value holds for the complementary family.

\subsection{Proof of Theorem \ref{thm-s} }	
	\begin{lem}\label{c-c-g}
		Let $n,k,r$ be positive integers with $1\le r\le\min\{k,n-k\}$ and $|k-n/2|\leq \sqrt{n}/2$. Let $\epsilon$ be a real number with $0< \epsilon < 1/2 $. If $\epsilon n<2r< (1-\epsilon)n$, then we have
		\[
		\chi(J(n,k,k-r))
		\ge
		\frac{1}{e\sqrt{2}}\,\frac{2^{\delta n}}{\sqrt n}
		\]
		for some positive constants $\delta=\delta(\epsilon)$.
	\end{lem}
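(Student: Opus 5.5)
We use the standard bound $\chi(G)\ge |V(G)|/\alpha(G)$ and bound the independence number with the Frankl--R\"odl forbidden-distance theorem, which the abstract alludes to. Identify each $k$-set with its characteristic vector in $\{0,1\}^n$. Then $|A\cap B|=k-r$ is equivalent to Hamming distance $d(A,B)=|A\triangle B|=2r$. So an independent set of $J(n,k,k-r)$ is a family $\mathcal F\subseteq\binom{[n]}{k}\subseteq\{0,1\}^n$ in which no two members are at Hamming distance exactly $2r$. The hypothesis $\epsilon n<2r<(1-\epsilon)n$ places the forbidden distance $d=2r$ in the range where the Frankl--R\"odl theorem applies. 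In its binary-code form, any $\mathcal F\subseteq\{0,1\}^n$ avoiding an even distance $d$ with $\epsilon n<d<(1-\epsilon)n$ satisfies $|\mathcal F|\le (2-\delta_0)^n$ for some $\delta_0=\delta_0(\epsilon)>0$. We invoke this as a black box; the evenness of $d=2r$ is exactly what the theorem needs. It gives $\alpha(J(n,k,k-r))\le (2-\delta_0)^n = 2^{(1-\delta_1)n}$, where $\delta_1=1-\log_2(2-\delta_0)>0$.

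Next we need a lower bound on the vertex count $\binom nk$ when $|k-n/2|\le\sqrt n/2$. We would use Stirling's approximation with Robbins' bounds, or simply the central estimate $\binom{n}{\lfloor n/2\rfloor}\ge 2^n/\sqrt{2n}$. This is combined with the ratio estimate
\[
\binom{n}{k}\Big/\binom{n}{\lfloor n/2\rfloor}\ge\prod_{i=1}^{m}\frac{\lceil n/2\rceil-m+i}{\lfloor n/2\rfloor+i}\ge\Bigl(1-\frac{2m}{n}\Bigr)^{m},\qquad m=|k-\lfloor n/2\rfloor|,
\]
which is bounded below by roughly $e^{-2m^2/n}\ge e^{-1/2}$ or so once $m\le\sqrt n/2$ (plus rounding). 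The goal is a bound of the shape $\binom nk\ge \frac{1}{e\sqrt2}\frac{2^n}{\sqrt n}$. Tracking the constants to land exactly on $1/(e\sqrt2)$ is routine bookkeeping: the deviation factor contributes at most $e^{-1}$ and the central bound contributes $1/\sqrt{2n}$.

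Dividing gives
\[
\chi(J(n,k,k-r))\ge\frac{\binom nk}{\alpha}\ge\frac{1}{e\sqrt2}\frac{2^n}{\sqrt n}\cdot 2^{-(1-\delta_1)n}=\frac{1}{e\sqrt2}\frac{2^{\delta n}}{\sqrt n},
\]
with $\delta=\delta_1(\epsilon)$. The main obstacle is the precise statement of the forbidden-distance theorem: Frankl--R\"odl is usually stated for $\{0,1\}^n$ with an even forbidden distance in $(\epsilon n,(1-\epsilon)n)$. If instead one only has the version for families of $k$-sets with forbidden intersection $k-r$, it yields $\alpha\le(2-\delta_0)^n$ directly. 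We will check that the parity and range hypotheses match our $d=2r$ under $\epsilon n<2r<(1-\epsilon)n$; both versions cover this case. Everything else is elementary estimation.
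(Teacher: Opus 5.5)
Your proposal is correct and follows the paper's proof. You view independent sets as binary codes avoiding Hamming distance $2r$, bound $\alpha(J(n,k,k-r))\le 2^{(1-\delta)n}$ by a forbidden-distance theorem (the paper cites a later version of the Frankl--R\"odl theorem for codes, but either suffices), and use $\chi\ge\binom nk/\alpha$.

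The only variation is the estimate $\binom nk\ge\frac{1}{e\sqrt2}\frac{2^n}{\sqrt n}$. The paper gets it directly from $\binom nk\ge\sqrt{n/(8k(n-k))}\,2^{nH_2(k/n)}$ and $1-H_2(k/n)\le 4(k/n-1/2)^2/\ln 2\le 1/(n\ln 2)$, which avoids your floor/ceiling bookkeeping. Incidentally, your stated bound $\binom{n}{\lfloor n/2\rfloor}\ge 2^n/\sqrt{2n}$ fails for $n=3,5$, but the available slack easily absorbs this.
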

	\begin{proof}
		Let $J(n,k,k-r)$ be the generalized Johnson graph.  We identify every \(k\)-subset \(A\subseteq[n]\) with its binary incidence
		vector in \(\{0,1\}^n\). Given two \(k\)-subsets \(A,B\), their Hamming distance is $d_H(A,B)=|A\triangle B|$. Then two distinct vertices $A,B$ of $J(n,k,k-r)$ are adjacent if and only if $d_H(A,B)=2r$. Let \(\mathcal I\) be an independent set in \(J(n,k,k-r)\). Then no two distinct members of \(\mathcal I\), viewed as binary vectors, have Hamming distance \(2r\). Thus \(\mathcal I\) is a binary code with forbidden distance \(2r\). By \cite[Theorem 1.3]{Keevash2016}, if $\epsilon n<2r< (1-\epsilon)n$, then the independence number $\alpha(J(n,k,k-r))$ of $J(n,k,k-r)$ is smaller than or equal to $2^{(1-\delta)n}$ with some positive constant $\delta=\delta(\epsilon)$. Since $\chi(J(n,k,k-r))\ge\frac{|V(J(n,k,k-r))|}{\alpha(J(n,k,k-r))}$, we have
		\begin{equation}\label{lem-13-eq1}
			\chi(J(n,k,k-r))
			\ge
			\frac{\binom nk}{2^{(1-\delta)n}}.
		\end{equation}
		
		Let
		$
		H_2(x)=-x\log_2x-(1-x)\log_2(1-x)
		$
		be the binary entropy function. By
		\cite[Chapter~10, Lemma~7]{MacWilliamsSloane1977}, we obtain
		\begin{equation}\label{lem-13-eq3}
		\binom nk
		\ge
		\sqrt{\frac{n}{8k(n-k)}}\,
		2^{nH_2(k/n)}.
	\end{equation}
		Since $\left|k-\frac n2\right|\le\frac{\sqrt n}{2}$, it follows from $\ln x\le x-1$ for $x>0$ that 
		\[
		1-H_2\left(\frac kn\right) \leq \frac{4(\frac kn-\frac 1 2)^2}{\ln 2} \leq \frac{1}{n \ln 2}.
		\]
		Hence, we obtain 
		\[
		2^{nH_2(k/n)}\ge e^{-1}2^n.
		\]
		By $k(n-k)\le n^2/4$ and \eqref{lem-13-eq3}, we deduce that 
		\[
		\binom nk
		\ge
		\frac{1}{e\sqrt2}\,
		\frac{2^n}{\sqrt n}.
		\]
		The desired result follows from \eqref{lem-13-eq1}.
	\end{proof}

\textbf{Proof of Theorem \ref{thm-s} :}	
		Let $J(n,k,k-r)$ be the generalized Johnson graph defined by Theorem \ref{thm-exact-1}. Since $J(n,k,k-r)$ is isomorphic to $J(n,n-k,n-k-r)$, we assume that $k\le n/2$. Let $k=\frac n2-x$ for some $x\ge 0$. By $r(n-1)=k(n-k)$ and \(r\ge n/4\), we have 
		\[
		r(n-1)=\frac{n^2}{4}-x^2 \ge \frac{n^2}{4}-\frac n4,
		\]
		which implies that $x^2\le \frac n4$. Since $r=\frac{k(n-k)}{n-1}=\frac{n^2/4-x^2}{n-1}$, we have $\lim_{n\to\infty}\frac{2r}{n}=\frac{1}{2}$. For all sufficiently large $n$, we have 
		$
		\frac n3<2r<\frac{2n}{3}.
		$
		By Lemma \ref{c-c-g}, we have 
		\[
		\chi(J(n,k,k-r))
		\ge
		\frac{1}{e\sqrt{2}} \frac{2^{\delta n}}{\sqrt n}
		\]
		for some positive constants $\delta=\delta(1/3)$. If $n$ is large enough, we deduce that
		\[
		\chi(J(n,k,k-r))
		\ge
		\frac{1}{e\sqrt{2}}\frac{2^{\delta n}}{\sqrt n}>n=\chi_Q(J(n,k,k-r)).
		\]
		
		When the generalized Johnson graph is defined by Theorem \ref{thm-exact-2}, the desired result follows from a similar argument as shown above. 
		
\section*{Acknowledgement}
Part of this work was done when Xiwang Cao was visiting Nanyang Technological University. He thanks the host institution for its hospitality. He also thanks Chris Godsil for helpful discussions.
		
\section*{Declaration}
ChatGPT Pro was used for improving the presentation of the paper.

\end{document}